\documentclass[10pt]{amsart}

\usepackage{enumerate}
\usepackage{amssymb}
\usepackage{amsmath,amscd}
\usepackage[colorinlistoftodos,disable]{todonotes}
\usepackage{amsthm}
\usepackage{tikz-cd}


\newcommand{\Har}{\mathcal{H}}

\newcommand{\Hess}{\operatorname{Hess}}

\newcommand{\F}{\mathcal{F}}
\newcommand{\B}{\mathcal{B}}
\newcommand{\Le}{\mathcal{L}}

\newcommand{\scal}[1]{\langle #1 \rangle}
\newcommand{\In}{\subset}


\newcommand{\R}{\mathbb{R}}

\newcommand{\C}{\mathbb{C}}

\newcommand{\HH}{\mathbb{H}}

\newcommand{\OO}{\operatorname{O}}
\newcommand{\SO}{\operatorname{SO}}

\newcommand{\U}{\operatorname{U}}

\newcommand{\Sp}{\operatorname{Sp}}

\newcommand{\Sym}{\operatorname{Sym}}

\newtheorem{theorem}{Theorem}

\newtheorem{lemma}[theorem]{Lemma}

\newtheorem{maintheorem}{Theorem}

\newtheorem{maincor}[maintheorem]{Corollary}

\theoremstyle{definition}
\newtheorem{definition}[theorem]{Definition}

\theoremstyle{remark}
\newtheorem{remark}[theorem]{Remark}
\newtheorem{example}[theorem]{Example}
\newtheorem{question}[theorem]{Question}

\title[Maximality of Laplacian algebras]{Maximality of Laplacian algebras, with applications to Invariant Theory}
\date{}

\author[R.~Mendes]{Ricardo A. E. Mendes}
\address{University of Oklahoma, USA}
\email{ricardo.mendes@ou.edu}

\author[M.~Radeschi]{Marco Radeschi}
\address{University of Notre Dame, USA}
\email{mradesch@nd.edu}

\thanks{The first-named author has been supported by the NSF grant DMS-2005373, and the second-named author by NSF 1810913.}

\subjclass[2010]{ 53C12,  13A50}

\keywords{Singular Riemannian foliations, Invariant Theory}

\begin{document}

\begin{abstract}
We show Laplacian algebras are maximal, and give applications to the Classical Invariant Theory of real orthogonal representations of compact groups, including: The solution of the Inverse Invariant Theory problem for finite groups. An if-and-only-if criterion for when a separating set is a generating set. And the introduction of a class of generalized polarizations which, in a certain class of representations (including all representations of finite groups), always generates the algebra of invariants of their diagonal representations.

\end{abstract}

\maketitle \todo{reverted title back to title in v1} \todo{changed abstract a bit, closer to v1}


\section{Introduction}
\subsection*{Invariant Theory without groups}

Given a representation $V$ of a group $G$, Invariant Theory studies the algebra of polynomials $f$ on $V$ which are invariant under the $G$-action $(g.f)(x)=f(g^{-1}x)$. 

We will restrict ourselves to the case where $V$ is a real, finite-dimensional inner product space (a \emph{Euclidean} vector space, for short), and $G$ is a compact group acting by orthogonal transformations. In this case, the algebra $\R[V]^G$ of invariant polynomials can be thought of as the algebra of polynomial functions $V\to \R$ which are constant along the fibers of the quotient map $\pi:V\to V/G$. Moreover, $\R[V]^G$ separates orbits, in particular the orbit space $V/G$ is Hausdorff.


This picture can be generalized to a setup not involving any group at all: notice, in fact, that the fibers of $\pi:V\to V/G$ (i.e. the $G$-orbits) are smooth, embedded and pair-wise equidistant submanifolds of $V$. Using these properties as a definition, one arrives at the notion of \emph{infinitesimal manifold submetry} $\sigma:V\to X$, from $V$ onto a metric space $X$, see \cite{MR19} for a precise definition (which we also recall in Section \ref{S:submetries} for the reader's convenience).
In addition to being a generalization of orbit decompositions, the fibers of manifold submetries also generalize classical objects from Foliation Theory, such as: \emph{transnormal systems}; \emph{singular Riemannian foliations}; and \emph{isoparametric foliations}, that is, the foliation given by the parallel and focal submanifolds of isoparametric submanifolds. In particular, there exist many important examples of infinitesimal manifold submetries which are \emph{inhomogeneous}, that is, not given by the orbits of some orthogonal action by a compact group, see Subsection \ref{SS:homogeneity}.

The ``Invariant Theory'' of infinitesimal manifold submetries has its roots in the study, by many authors,  of the algebraic aspects of singular Riemannian foliations and isoparametric foliations of Euclidean space. It starts with the definition of the algebra $\mathcal{B}(\sigma)$ of polynomials constant along the fibers of an infinitesimal manifold submetry $\sigma$, called \emph{basic polynomials}, generalizing the algebra $\R[V]^G$ of invariant polynomials. Crucially, $\B(\sigma)$ separates fibers, and thus determines $\sigma$, like in the homogeneous case. An early version of this was proved for isoparametric foliations in \cite{Muenzner80, Muenzner81}, then for singular Riemannian foliations in \cite{LR18}, and later for general manifold submetries in \cite{MR19}.

One of the goals of the present paper is to show how this more general theory can give back to Classical Invariant Theory. 
Some of the applications we present (Corollaries \ref{MC:separating},  \ref{MC:polarization}, and  Theorem \ref{MT:maximality}) have purely algebraic proofs, and thus could be proved without leaving the realm of Classical Invariant Theory. Others (Corollary \ref{MC:IIT} and Theorem \ref{MT:polarization2}) rely heavily on the geometric machinery of \cite{MR19} and previous works. 

\subsection*{Maximality of Laplacian algebras} \todo{moved this subsection here}
Define \emph{Laplacian algebras}, as the sub-algebras $A\subset \R[V]$ that contain the  ``distance-squared'' polynomial $r^2=\sum_i x_i^2$ and are preserved by the differential operator ``dual'' to  $r^2$, namely the Laplacian $\Delta = \sum_i \partial^2/\partial x_i^2$. 
To define maximality, note that any $A\subset\R[V]$ defines  an equivalence relation $\sim_A$ on $V$ where $x\sim_A y$ if and only if $f(x)=f(y)$ for all $f\in A$. Then we say that a sub-algebra $A\subset\R[V]$  is \emph{maximal} if any strictly larger algebra would define an equivalence relation strictly finer than $\sim_A$.

It was conjectured in \cite{MR19} that Laplacian implies maximal, and the solution of this conjecture is the technical heart of the present paper:
\begin{maintheorem}
\label{MT:maximality}
Let $A\In\R[V]$ be a Laplacian algebra. Then $A$ is maximal.
\end{maintheorem}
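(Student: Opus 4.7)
The plan is to reduce the theorem to \cite[Theorem A]{MR19}---which identifies maximal Laplacian subalgebras with basic polynomial algebras of infinitesimal manifold submetries---by constructing, from any Laplacian algebra $A\subseteq\R[V]$, an infinitesimal manifold submetry $\sigma_A\colon V\to X_A$ satisfying $\B(\sigma_A)=A$. Once such a $\sigma_A$ is in hand, maximality of $A$ is immediate from \cite[Theorem A]{MR19}.

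First I would record the structural consequences of the Laplacian hypothesis. The identity $[\Delta,r^2]=4E+2n$, with $E=\sum x_i\partial_i$ the Euler field, together with $r^2\in A$ and $\Delta(A)\subseteq A$, implies $E(A)\subseteq A$, so $A$ is graded and in particular compatible with the Fischer harmonic decomposition $\R[V]=\bigoplus_{k,d}r^{2k}\Har_d$. This grading, combined with $\Delta$-closure, turns the restriction $A|_{\sphere}$ to the unit sphere $\sphere\subseteq V$ into a subalgebra of $C^\infty(\sphere)$ closed under the spherical Laplacian $\Delta_{\sphere}$, so the problem acquires a geometric analytic flavor on the sphere.

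Next I would construct the candidate submetry as the quotient $\sigma_A\colon V\to X_A:=V/\!\sim_A$ equipped with the quotient pseudometric. The main obstacle is verifying that $\sigma_A$ is actually an infinitesimal manifold submetry: its fibers must be smooth embedded submanifolds forming an equidistant family. Each fiber is \emph{a priori} only a closed real-algebraic subset (cut out by translates of elements of $A$, forced onto a sphere about the origin because $r^2\in A$); smoothness must be extracted from the $\Delta_{\sphere}$-invariance of $A|_{\sphere}$ via regularity properties of $\Delta_{\sphere}$-eigenspaces, while equidistance comes from showing that the gradient distribution spanned by $\{\nabla f : f\in A\}$ is integrable to horizontal geodesics---a condition again controlled by $\Delta$-closure, essentially because Laplacian invariance encodes that the horizontal distribution admits a constant-coefficient divergence theory. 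This geometric upgrade of purely algebraic data is the technical heart of the argument.

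Having produced $\sigma_A$, the inclusion $A\subseteq\B(\sigma_A)$ is built in; for the reverse, an averaging operator associated to the submetry acts as the identity on $A$ (since $A$ is already basic) but projects any $g\in\B(\sigma_A)$ into $A$ degree by degree, using the harmonic decomposition from the first step to handle one $\Har_d$-component at a time. Applying \cite[Theorem A]{MR19} to $\sigma_A$ then certifies that $A=\B(\sigma_A)$ is maximal Laplacian, completing the proof. The principal difficulty is the middle step: extracting smooth submanifold structure and equidistance for the fibers of $\sim_A$ from the algebraic $\Delta$-invariance hypothesis. All the geometric content of the theorem is concentrated there; once the submetry is in hand, \cite{MR19} supplies the rest.
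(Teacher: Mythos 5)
The decisive gap is in your last step. Even granting the construction of $\sigma_A$, what you must prove is the inclusion $\B(\sigma_A)\In A$; since the fibers of $\sigma_A$ are by construction the classes of $\sim_A$, this inclusion is verbatim the statement $A=\B(\Le(A))$, i.e.\ maximality---the very thing to be proved. The averaging argument cannot close this circle: a fiberwise averaging operator associated to $\sigma_A$ fixes \emph{every} polynomial constant on the fibers, hence fixes all of $\B(\sigma_A)$, not just $A$, so nothing ``projects $g\in\B(\sigma_A)$ into $A$''; and the harmonic decomposition does not help, because a priori the components of a basic polynomial in $r^{2i}\Har_{d-2i}$ need not lie in $A$ either. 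Relatedly, \cite[Theorem A]{MR19} is a correspondence between submetries and \emph{maximal} Laplacian algebras; the version with ``maximal'' removed is Theorem A of the present paper, which is deduced \emph{from} the maximality theorem, so invoking such a correspondence here is circular.

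The middle step is also a major unproven claim: showing that the fibers of $\sim_A$ are smooth, embedded, pairwise equidistant submanifolds (so that $\sigma_A$ is an infinitesimal manifold submetry) for a Laplacian algebra \emph{not} assumed maximal is at least as hard as the machinery of \cite{MR19} you want to quote, and the sketch (regularity of $\Delta_{SV}$-eigenspaces, integrability of the gradient distribution) does not indicate how to carry it out. Moreover, even if this were done, you would only learn that $\B(\Le(A))$ is a maximal Laplacian algebra with the same leaves as $A$---still not that $A=\B(\Le(A))$. The paper's proof avoids constructing any submetry: assuming $A\subsetneq B:=\B(\Le(A))$, both algebras are graded, so there is a homogeneous $f\in B_d$, nonzero and $\scal{\ ,}_d$-orthogonal to $A_d$; Stone--Weierstrass applied to the image of $A$ in $C^0(SV/\!\sim_A)$ yields $g\in A$ (which, using $r^2\in A$ and $\Delta$-invariance, may be taken homogeneous of the same degree and parity as $f$) with $\int_{SV}fg\,d\mathrm{vol}>0$, while Schur's Lemma shows the $L^2$-pairing on $SV$ is, summand by summand in the harmonic decomposition, a positive multiple of $\scal{\ ,}_d$, forcing $\int_{SV}fg\,d\mathrm{vol}=0$, a contradiction. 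No geometric structure on $\Le(A)$ enters at any point.
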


\subsection*{Inverse Invariant Theory} One indication that infinitesimal manifold submetries are a natural setting for Invariant Theory is that it allows for the solution of problems that seem to have been  out of reach of classical Invariant Theory. Take, for example, the \emph{Inverse Invariant Theory Problem}, about characterizing the sub-algebras of $\R[V]$ which are algebras of invariants of some representation.  This seems to only have been solved over finite fields \cite[Section 8.4]{NeuselSmith}, but in our more general context there is a satisfying solution, namely the Laplacian algebras defined above:
\begin{maincor}
\label{MC:correspondence} \todo{a corollary instead of theorem}
Let $V$ be a Euclidean vector space. Then, taking algebras of basic polynomials gives a one-to-one correspondence between infinitesimal manifold submetries from $V$, and Laplacian sub-algebras of $\R[V]$. 
\end{maincor}
Corollary \ref{MC:correspondence} is an immediate consequence of Theorem \ref{MT:maximality} and \cite[Theorem A]{MR19}. Indeed, the only difference between Corollary \ref{MC:correspondence} and \cite[Theorem A]{MR19}, is that, in the latter, the algebras are required to be \emph{both} Laplacian and maximal.

Analogously, the maximality condition can be removed from \cite[Theorems B and C]{MR19}, which provides the following solution to the Inverse Invariant Theory Problem for finite groups:
\begin{maincor}\label{MC:IIT} \todo{a corollary instead of theorem, and added part (b)}
Let $A\In\R[V]$ be a sub-algebra. Then
\begin{enumerate}[(a)]
\item $A$ is the algebra of invariants of a finite subgroup $G\In\OO(V)$ if and only if $A$ is Laplacian and the field of fractions of $A$ has transcendence degree (over $\R$) equal to $\dim(V)$.
\item $A$ is the algebra of basic polynomials of a transnormal system if and only $A$ is Laplacian and integrally closed in $\R[V]$.
\end{enumerate}
\end{maincor}


\subsection*{Application to separating sets}

Another  application to Classical Invariant Theory concerns \emph{separating sets}, which is a topic of much recent research --- see \cite[Section 2.4]{DerksenKemper} and references therein. In our context, a set $\mathcal{S}$ of $G$-invariant polynomials is called separating if it separates $G$-orbits.
The following characterizes the separating sets which generate the whole algebra of invariants:
\begin{maincor}
\label{MC:separating} \todo{a corollary instead of theorem}
Let $V$ be a real orthogonal representation of the compact group $G$, and let $\R[V]^G$ denote its algebra of invariants. Let $\mathcal{S}\In \R[V]^G$ be a separating set containing $r^2$, and $B$ the subalgebra generated by $\mathcal{S}$. Then $B=\R[V]^G$ if and only $\Delta f$ and $\scal{\nabla f, \nabla g}$ belong to $B$, for every $f,g\in \mathcal{S}$.
\end{maincor}
We note that the condition on $\mathcal{S}$ given in Corollary \ref{MC:separating} is equivalent to $B$ being Laplacian (see \cite[Proposition 37]{MR19} or Lemma \ref{L:Lapiff} below), so that Corollary \ref{MC:separating} is a direct consequence of Theorem \ref{MT:maximality}. With a bit more work, we in fact prove a stronger version, which holds for infinitesimal manifold submetries, and only requires $\mathcal{S}$  to be ``locally'' separating (see Theorem \ref{T:separating}). For a different criterion for a separating set to be generating, in the context of rational representations of reductive groups over algebraically closed fields, see \cite[Theorem 2.4.6]{DerksenKemper}.


Corollary \ref{MC:separating} can be a useful tool to prove First Fundamental Theorems, that is, to show that certain sets of polynomials generate the algebra of basic polynomials of a given manifold submetry, see Subsection \ref{SS:FFT} for a few illustrative examples.

\subsection*{Application to polarizations}

Taking the sum of $k$ copies of a $G$-representation $V$ produces a $G$-representation $V^k$. Recall that, given a homogeneous $G$-invariant polynomial $f$ of degree $d$, its \emph{polarizations} (which we will sometimes call \emph{classical polarizations} to distinguish them from a generalization described below)  are the multi-variable invariants $f_\alpha\in\R[V^k]^G$, where $\alpha=(\alpha_1, \ldots , \alpha_k)$ runs through the multi-indices with $|\alpha| =\sum_i \alpha_i=d$, defined by
\begin{equation}\label{E:polarization}
f\left(\sum_i s_i v_i\right)=\sum_{|\alpha|=d} s_1^{\alpha_1}\cdots s_k^{\alpha_k} f_\alpha(v_1, \ldots, v_k)
\end{equation}
where $s_i$ are formal variables.

Alternatively, the algebra generated by all polarizations can also be seen as the smallest sub-algebra of $\R[V^k]$ containing $\R[V]^G$ (seen as polynomials depending only on the first variable $v_1$) and closed under a certain family of differential operators called \emph{polarization operators}, see Subsection \ref{SS:poldefs}.

Inspired by the latter form of the definition, we introduce a sub-algebra $A^{(k)}\subset \R[V^k]$, which we call the algebra of \emph{generalized polarizations}, associated to any Laplacian algebra $A\subset\R[V]$. It is defined as the smallest Laplacian algebra containing $A$ (seen as polynomials depending only on the first variable $v_1$) and the inner products $(v_1, \ldots, v_k)\mapsto \langle v_i, v_j \rangle$ for all $i,j$.  When $A=\R[V]^G$ for a compact $G$, the algebra $A^{(k)}$ contains all classical polarizations, and is contained in $\R[V^k]^G$. 

For most $G$-representations $V$, classical polarizations \emph{do not} generate  $\R[V^k]^G$. It is conjectured, for example, that for $G$ finite, $\R[V^2]^G$ can only be generated by classical polarizations if $G$ is generated by reflections, see  \cite{Schwarz07} and Remark \ref{R:polproperty} below for a more complete discussion. In contrast, one has:
\begin{maincor}
\label{MC:polarization} \todo{a corollary instead of theorem}
Let $V$ be a real orthogonal representation of the finite group $G$, and $k\geq 2$. Then the algebra of invariants of the $G$-representation $V^k$ is generated by generalized polarizations. 
\end{maincor}

Corollary \ref{MC:polarization} is an immediate consequence of Corollary \ref{MC:separating} and the fact that classical polarizations separate $G$-orbits in $V^k$ for finite $G$, see \cite[Theorem 3.4]{DKW08}. 

The idea of using the Laplacian was also employed in \cite{Iltyakov98} to find generators for the algebra $\mathbb{C}[V^k]^G$ in the special case where $V$ is a complex finite-dimensional simple algebra with a non-degenerate inner product and a real form, and $G$ is the automorphism group of $V$.

The authors suspect that taking the diagonal representation does not have an analogue for inhomogeneous infinitesimal manifold submetries, and, relatedly, that the algebra of generalized polarizations $A^{(k)}$ is homogeneous for every $k\geq 2$ and every Laplacian algebra $A$. The second main result in the present paper is a partial result in this direction, under the additional assumption (called ``$k$-NS'', see Definition \ref{D:k-NS} and Lemma \ref{L:k-NS}) that, for generic $x_1,\ldots , x_k\in V$, the normal spaces to the $\sigma$-fibers at these points span $V$:
\begin{maintheorem}[Homogeneity of generalized polarizations]
\label{MT:hompol} \todo{added as main theorem}
Let $\sigma:V\to X$ be an infinitesimal manifold submetry satisfying $k$-NS, with associated algebra of basic polynomials $A=\B(\F)$. Let $\OO(\sigma)$ be the closed subgroup of $\OO(V)$ consisting of all $g\in\OO(V)$ that map each $\sigma$-fiber to itself, and consider the diagonal action of $\OO(\sigma)$ on $V^k$. Then $A^{(k)}=\R[V^k]^{\OO(\sigma)}$.
\end{maintheorem}
Because of Corollary \ref{MC:separating} (more precisely, its local version Theorem \ref{T:separating}), the proof of Theorem \ref{MT:hompol} is reduced to showing that $A^{(k)}$ locally separates $\OO(\F)$-orbits. Thus Theorem \ref{MT:hompol} is analogous to \cite[Theorem 3.4]{DKW08}, and can replace its use in the proof of Corollary \ref{MC:polarization}, see Remark \ref{R:altproof}.

To discuss how Theorem \ref{MT:hompol} leads to a generalization of  Corollary \ref{MC:polarization} for certain $G$-representations $V$, where $G$ is an infinite compact group, we assume for simplicity that the representation is faithful, so that we may treat $G$ as a subgroup of the orthogonal group $\OO(V)$.

Since $G$ is infinite, one faces a new difficulty in that a different subgroup $G'$ may have the same orbits as $G$, so that in particular $\R[V]^G=\R[V]^{G'}$. Such a pair of groups are called \emph{orbit-equivalent}, the simplest example being $\SO(n)$ and $\OO(n)$ acting on $\R^n$ for $n>1$. In such a case, $\R[V^k]^G$ and $\R[V^k]^{G'}$ will necessarily be different for $k$ large enough. Therefore, no procedure that produces elements of $\R[V^k]^G$ out of $\R[V]^G$ can possibly be enough to generate all of $\R[V^k]^G$ for a general subgroup $G$. To fix this, we impose the condition that $G$ be maximal (with respect to inclusion) in its orbit-equivalence class. In the notation of Theorem \ref{MT:hompol}, this is equivalent to $G=\OO(\sigma)$, where $\sigma$ is the natural map $V\to V/G$. Thus we immediately obtain the first part of the Theorem below, where we also list two natural conditions that imply $k$-NS:

\begin{maintheorem}
\label{MT:polarization2} \todo{a corollary instead of theorem, and added general statement involving $k$-NS}
Let $G\In \OO(V)$ be a closed subgroup, with algebra of invariants $A=\R[V]^G$. Assume that $G$ is maximal in its orbit-equivalence class, and that the decomposition of $V$ into $G$-orbits satisfies $k$-NS, for $k\geq 2$. Then the algebra of invariants of the $G$-representation $V^k$ coincides with the algebra of generalized polarizations $A^{(k)}$.

Moreover, either condition below implies $k$-NS:
\begin{enumerate}[(a)]
\item the connected component of $G$ is a torus;
\item $k\geq \dim(V)$;
\end{enumerate}
\end{maintheorem}

The authors do not know if the $k$-NS condition is necessary:
\begin{question}
Let $G$ be a compact subgroup of $\OO(V)$, maximal in its orbit-equivalence class,  and $k\geq 2$. Is the algebra of invariants of the $G$-representation $V^k$ is generated by generalized polarizations?
\end{question}

Finally, we mention that generalized polarizations can be used to give a sufficient criterion for a manifold submetry to be homogeneous, see Subsection \ref{SS:homogeneity}.

\subsection*{Sketch of the proofs}

To prove  that a Laplacian algebra $A$ is maximal (Theorem \ref{MT:maximality}), we consider the unit sphere $SV$ in $V$, and its quotient $X=SV/\sim_A$ by the equivalence relation given by $A$ as above. Then $A$ is an algebra of continuous functions on the compact Hausdorff topological space $X$, which separates points of $X$ by definition, so the Stone--Weierstrass Theorem implies that $A$ is dense in $C^0(X)$.

Assuming $A$ is not maximal, there is a homogeneous polynomial $f\notin A$ that is constant on the equivalence classes of $\sim_A$, and hence descends to a continuous function on $X$. The fact that $A$ is Laplacian means that $A$ is compatible with the Theory of Spherical Harmonics. This implies that $f$ may be taken orthogonal to $A$ in the appropriate sense, and that the induced element of $C^0(X)$ is orthogonal to $A$ in the $L^2$ inner product, contradicting density of $A$.


We turn to Theorem  \ref{MT:hompol}. By a local version of Corollary \ref{MC:separating} (Theorem \ref{T:separating} below), it is enough to show that $A^{(k)}$ generically separates $\sigma$-fibers, because $A^{(k)}$ is Laplacian by definition.

For simplicity, assume $k=2$. Given $(x,y), (z,w)\in V^2$ such that $F(x,y)=F(z,w)$ for all $F\in A^{(2)}$, we need to find $g\in \OO(\sigma)$ such that $g(x,y)=(z,w)$. Since $A^{(2)}$ contains all inner products by definition, we have $\|x\|=\|z\|$, $\|y\|=\|w\|$, and $\langle x, y \rangle= \langle z,w\rangle$, and so there exists $g\in \OO(V)$ such that $g(x,y)=(z,w)$. Making $F$ run through all classical polarizations, the original definition given by \eqref{E:polarization} shows that $f(sx+ty)=f(sz+tw)$ for all $f\in A$, so that $sx+ty$ and $g(sx+ty)$ belong to the same $\sigma$-fiber, for all $s,t\in \R$.

A similar argument using generalized polarizations instead of classical polarizations yields $g\in \OO(V)$ such that $g(x,y)=(z,w)$, and such that $\nabla f (x)+ \nabla h(y)$ and $g(\nabla f (x)+ \nabla h(y))$ belong to the same $\sigma$-fiber for all $f,h\in A$. The $2$-NS condition implies that, for $(x,y)$  generic,  every $v\in V$ can be written in the form $\nabla f (x)+ \nabla h(y)$ for appropriate choices of $f,h$. Thus $g\in\OO(V)$ takes every $\sigma$-fiber to itself, that is,  $g\in \OO(\sigma)$.

\subsection*{Organization of the paper}
In Section \ref{S:maximality} we recall some basic definitions, and the theory of Spherical Harmonics, and give a proof of Theorem \ref{MT:maximality}. Section \ref{S:submetries} contains definitions and basic facts involving submetries. In Section \ref{S:separating} we introduce the notion of local separating set, prove Theorem \ref{T:separating} (a strengthening of Corollary \ref{MC:separating}), and illustrate how it can be used to prove First Fundamental Theorems. Section \ref{S:pols} is devoted to the study of generalized polarizations. It includes the proofs of Theorem \ref{MT:hompol}, Theorem \ref{MT:polarization2}, and of a sufficient criterion for a manifold submetry to be homogenous.

\subsection*{Acknowledgements} 
It is a pleasure to thank Harm Derksen for pointing out \cite{Schrijver08}, which was the main inspiration for the proof of Theorem \ref{MT:maximality}, and Matyas Domokos for a simplification of the proof of Corollary \ref{MC:polarization} using \cite{DKW08}. We would also like to thank Alexander Lytchak and Matyas Domokos for suggestions that improved the exposition. 

\section{Laplacian algebras are maximal}
\label{S:maximality}
This  section is devoted to the proof of Theorem \ref{MT:maximality}, given in Subsection \ref{SS:maximality}. To fix notations and for the sake of completeness, the definitions and basic facts needed in the proof are laid out in the first four subsections. The material in Subsections \ref{SS:polynomials} and \ref{SS:harmonic} is well-known, see for example \cite[Exercise 12(e), page 118]{HoweTan} or \cite[Introduction, Section 3]{HelgasonGGA}. The material in Subsections \ref{SS:laplacian} and \ref{SS:maximal} is either contained in, or follows easily from, \cite{MR19}.

\subsection{Polynomials}\label{SS:polynomials}
Let $V$ be a Euclidean vector space, that is, a real finite-dimensional vector space with inner product $\scal{\,,}$ (which we will occasionally write $\scal{\,,}_V$), and denote by $\OO(V)$ the corresponding orthogonal group.

Let $\R[V]$ be the algebra of polynomial functions $V\to\R$, which is graded in the sense that 
\[ \R[V]=\bigoplus_{d=0}^\infty \R[V]_d \]
where $\R[V]_d$ denotes the space of homogeneous polynomials of degree $d$. The group $\OO(V)$ acts on $\R[V]$ by $(Uf)(v)= f(U^{-1}v)$ for $f\in \R[V]$ and $U\in\OO(n)$, and this action preserves the grading.

Choose an orthonormal basis $e_1, \ldots e_n$ of $V$, and dual basis $x_1, \ldots x_n$ of $V^*$. In particular we have an identification of $\R[V]$ with $\R[x_1, \ldots , x_n]$, and of $\OO(V)$ with the group of orthogonal $n\times n$ matrices $\OO(n)$.

Given $f\in\R[V]_d$, its \emph{dual} is the differential operator $\hat{f}$, of order $d$, obtained from $f$ by replacing each variable $x_i$ with the partial derivative $\frac{\partial}{\partial x_i}$, and products with composition. Define a bilinear form $\scal{\ ,}_d$ on each $\R[V]_d$ by
\begin{equation}
\label{E:innerproduct}
\scal{f,g}_d= \hat{f}(g). 
\end{equation} 
It is not hard to see that this is an inner product,  that the monomials $x_1^{\alpha_1} \ldots x_n^{\alpha_n}$ (where $\alpha_i\geq 0$ and $\alpha_1+\cdots \alpha_n=d$) form an orthogonal basis, and that $\|x_1^{\alpha_1} \cdots x_n^{\alpha_n}\|_d^2=\alpha_1!\cdots \alpha_n!$. Note also that the dual operation satisfies $\widehat{fg}=\hat{f}\hat{g}$, so in particular multiplication with a polynomial $h$ is adjoint to $\hat{h}$. More precisely, for every triple of homogeneous polynomials with $\deg(fh)=\deg(g)$, one has $\scal{hf,g}_{\deg(g)}=\scal{f,\hat{h}(g)}_{\deg(f)}$.

\begin{lemma}
The inner product $\scal{\ ,}_d$  on $\R[V]_d$ defined in \eqref{E:innerproduct} above is $\OO(V)$-invariant. 
\end{lemma}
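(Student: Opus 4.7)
The plan is to reduce the invariance check to the case where $f,g$ are products of linear forms, where the inner product admits a manifestly $\OO(V)$-invariant closed form.

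For each $v\in V$, let $\ell_v\in \R[V]_1$ denote the linear form $\ell_v(x)=\langle v,x\rangle_V$. A direct computation in the chosen orthonormal coordinates shows that $\widehat{\ell_v}$ is the directional derivative $D_v$. Combined with the stated multiplicativity $\widehat{fg}=\hat f\hat g$, this yields $\widehat{\ell_{v_1}\cdots\ell_{v_d}}=D_{v_1}\cdots D_{v_d}$. Next, an easy induction on $d$ via the product rule gives
\[ \langle \ell_{v_1}\cdots\ell_{v_d},\ \ell_{w_1}\cdots\ell_{w_d}\rangle_d \;=\; D_{v_1}\cdots D_{v_d}\bigl(\ell_{w_1}\cdots\ell_{w_d}\bigr) \;=\;\sum_{\sigma\in S_d}\prod_{i=1}^d \langle v_i, w_{\sigma(i)}\rangle_V, \]
which is the permanent of the Gram matrix $(\langle v_i,w_j\rangle_V)_{ij}$.

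To conclude, I would observe that $(U\ell_v)(x)=\ell_v(U^{-1}x)=\langle Uv,x\rangle_V=\ell_{Uv}(x)$, so $U\ell_v=\ell_{Uv}$, and since the $\OO(V)$-action is an algebra homomorphism on $\R[V]$, one has $U(\ell_{v_1}\cdots\ell_{v_d})=\ell_{Uv_1}\cdots\ell_{Uv_d}$. Substituting into the permanent formula above, together with the fact that $U$ preserves $\langle\,,\rangle_V$, yields $\langle Uf,Ug\rangle_d=\langle f,g\rangle_d$ whenever $f,g$ are products of linear forms. Since each basis monomial $x_1^{\alpha_1}\cdots x_n^{\alpha_n}$ is itself such a product, products of linear forms span $\R[V]_d$; bilinearity of $\langle\,,\rangle_d$ and linearity of the $\OO(V)$-action then upgrade the invariance to all of $\R[V]_d$.

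I do not anticipate a serious obstacle: the one step that requires genuine bookkeeping is the product-rule expansion yielding the permanent, and this is a standard combinatorial computation. As an alternative route, I could instead establish the intertwining identity $\widehat{Uf}=U\hat f U^{-1}$ (using again $\widehat{\ell_v}=D_v$ together with the easy identity $D_{Uv}=U D_v U^{-1}$), from which invariance follows because $\hat f(g)$ is a constant and $U$ fixes constants; I would keep this in reserve as a sanity check.
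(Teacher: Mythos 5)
Your proof is correct, and it takes a genuinely different (though closely related) route from the paper. The paper identifies $\R[V]_d$ with the space of symmetric tensors $\Sym^d(V^*)$ and shows that, under this identification, $\scal{\ ,}_d$ equals $d!$ times the restriction of the manifestly $\OO(V)$-invariant inner product on $(V^*)^{\otimes d}$; the normalization is checked by a counting argument on monomials. You instead stay inside $\R[V]_d$, reduce by bilinearity to products of linear forms $\ell_{v_1}\cdots\ell_{v_d}$ (which span, since monomials are of this form), and compute
\[
\scal{\ell_{v_1}\cdots\ell_{v_d},\ \ell_{w_1}\cdots\ell_{w_d}}_d=\sum_{\sigma\in S_d}\prod_{i=1}^d\scal{v_i,w_{\sigma(i)}}_V,
\]
a permanent of Gram matrices that is visibly unchanged when all vectors are rotated by $U$, given $U\ell_v=\ell_{Uv}$ and multiplicativity of the action. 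The combinatorial core is the same --- your permanent is exactly what the symmetrized tensor inner product produces, which is why your formula also recovers the stated norms $\|x_1^{\alpha_1}\cdots x_n^{\alpha_n}\|_d^2=\alpha_1!\cdots\alpha_n!$ --- but your packaging is more elementary and self-contained: it avoids the identification with $\Sym^d(V^*)$ and the $d!$ normalization. What the paper's approach buys in exchange is the structural statement that $\scal{\ ,}_d$ is (up to scale) the restriction of the natural tensor inner product, which is worth knowing beyond this lemma. Your reserve argument via the intertwining identity $\widehat{Uf}=U\hat{f}U^{-1}$ is also valid and is arguably the slickest of the three, since invariance then drops out immediately from $\hat{f}(g)$ being a constant fixed by $U$.
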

\begin{proof}
$\R[V]_d$ is isomorphic, as an $\OO(V)$-representation, to the space of symmetric tensors $\Sym^d(V^*)$, that is, to the subspace of all elements of $(V^*)^{\otimes d}$ that are fixed by the natural action of the permutation group $S_d$. Namely, the multilinear map  $\alpha\in \Sym^d(V^*)$ corresponds to the polynomial function $f=\phi(\alpha)$ given by $f(v)=\alpha(v, \ldots, v)$.

The inner product on $V$ induces a natural inner product on $V^*$, which we also denote by $\scal{\,,}$, which then induces the following inner product on $(V^*)^{\otimes d}$, which is clearly $\OO(V)$-invariant:
\[ \scal{ \lambda_1\otimes \cdots \otimes \lambda_d , \mu_1\otimes \cdots\otimes \mu_d}= \scal{\lambda_1, \mu_1} \cdots \scal{\lambda_d, \mu_d} .\] 

Thus it suffices to show that $\scal{\phi(\alpha), \phi(\beta)}_d=d!\scal{\alpha,\beta}$ for all $\alpha, \beta\in\Sym^d(V^*)$. To this end, note that $x_1^{\alpha_1} \ldots x_n^{\alpha_n}=\phi(\alpha)$ with 
\[\alpha= \frac{1}{d!}\sum_{\sigma\in S_d}\sigma\big((x_1)^{\otimes\alpha_1}\otimes\cdots \otimes (x_n)^{\otimes\alpha_n}\big)\]
and that $\scal{\alpha,\alpha}=$
\[\frac{1}{(d!)^2}\sum_{\sigma,\tau\in S_d}\scal{\sigma\big((x_1)^{\otimes\alpha_1}\otimes\cdots \otimes (x_n)^{\otimes\alpha_n}\big), \tau \big((x_1)^{\otimes\alpha_1}\otimes\cdots \otimes (x_n)^{\otimes\alpha_n}\big)}\]
which equals $\frac{\alpha_1!\cdots\alpha_n!}{d!}$.
\end{proof}

\subsection{Harmonic polynomials}\label{SS:harmonic}
We keep the same notations as in the previous subsection. Let $r^2$ denote the quadratic polynomial
\[r^2= x_1^2+\cdots +x_n^2\in\R[V],\]
and let $\Delta$ denote the Laplace operator
\[\Delta=\widehat{r^2}=\frac{\partial^2}{\partial x_1^2} +\cdots+\frac{\partial^2}{\partial x_n^2}.\]
For each $d\geq 0$, let $\Har_d$ denote the subspace of $\R[V]_d$ consisting of the \emph{harmonic polynomials}, that is, polynomials $f$ satisfying $\Delta(f)=0$. Then one has the following $\scal{\ ,}_d$-orthogonal  direct sum decomposition of $\R[V]_d$. 
\begin{equation}
\label{E:harmonic}
\R[V]_d=\Har_d\oplus r^2\Har_{d-2}\oplus\cdots\oplus r^{2\lfloor d/2 \rfloor}\Har_{d-2\lfloor d/2\rfloor}
\end{equation}
The summands above are also the irreducible components of $\R[V]_d$ as an $\OO(V)$-representation, and they are pairwise inequivalent. These facts are usually called the theory of spherical harmonics, see for example \cite[Exercise 12(e), page 118]{HoweTan}.


Another, more geometric, inner product one can define on $\R[V]_d$ is the $L^2$-product given by
\[\scal{f,g}_{L^2} = \int_{SV}fg\, d\mathrm{vol}\]
where $SV$ denotes the unit sphere in $V$, and $d\mathrm{vol}$ its natural Riemannian volume form. Since this inner product is also $\OO(V)$-invariant, we can apply Schur's Lemma to conclude that there exist positive constants $C_i^d$ such that
\begin{equation}
\label{E:L2}
\scal{f,g}_{L^2}=\sum_{i=0}^{\lfloor d/2\rfloor}C_i^d \scal{f_i, g_i}_d
\end{equation} 
according to the decomposition of $f,g\in\R[V]_d$ in \eqref{E:harmonic}. That is, $f=\sum_i f_i$, and $g=\sum_i g_i$, with $f_i, g_i\in r^{2i}\Har_{d-2i}$.

\subsection{Laplacian algebras}\label{SS:laplacian}
Laplacian algebras were introduced in \cite{MR19}:
\begin{definition}
Let $A\In\R[V]$ be a sub-algebra. It is called \emph{Laplacian} if it contains $r^2$ and is preserved by the Laplace operator $\Delta$.
\end{definition}

\begin{lemma}
\label{L:Agraded}
Let $A\in\R[V]$ be a Laplacian algebra. Then 
\begin{enumerate}[(a)]
\item $A$ is graded, that is,
\[ A=\bigoplus_{d=0}^\infty A_d ,\]
where $A_d=A\cap\R[V]_d$.

\item Each $A_d$ is graded with respect to the decomposition in \eqref{E:harmonic}:
\[ A_d=(A_d\cap\Har_d)\oplus (A_d\cap r^2\Har_{d-2})\oplus\cdots\oplus (A_d\cap r^{2\lfloor d/2 \rfloor}\Har_{d-2\lfloor d/2\rfloor}).\]

\end{enumerate}
\end{lemma}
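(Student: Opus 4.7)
The plan is to derive each part from the observation that $A$ is preserved by a suitable differential operator whose eigenspace decomposition on $\R[V]$ (resp.\ $\R[V]_d$) is exactly the one we want. Once such an operator is shown to preserve $A$, a standard polynomial-interpolation (Vandermonde) argument applied to its iterates shows that any invariant subspace is the direct sum of its intersections with the eigenspaces.

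For (a), the operator of choice is the Euler operator $E = \sum_i x_i\,\partial/\partial x_i$, whose $d$-eigenspace on $\R[V]$ is precisely $\R[V]_d$. I would first verify the commutator identity
\[\Delta(r^2 f) - r^2\,\Delta f = 2nf + 4E(f),\qquad n = \dim V,\]
which follows by expanding $\Delta(r^2 f)$ via Leibniz, using $\Delta(r^2) = 2n$ and $\nabla(r^2)\cdot \nabla f = 2E(f)$. For $f\in A$, each term on the left-hand side lies in $A$, since $A$ is an algebra containing $r^2$ and is closed under $\Delta$; hence $E(f) \in A$. Writing an arbitrary $f \in A$ as a finite sum $f = f_{d_1} + \cdots + f_{d_k}$ of homogeneous components of distinct degrees, the vectors $E^j(f) = \sum_i d_i^j f_{d_i}$ (for $j = 0,\ldots,k-1$) all lie in $A$ and form a Vandermonde system in the $f_{d_i}$; inverting it shows each $f_{d_i}\in A$, proving (a).

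For (b), I would use $r^2\Delta$ as the degree-preserving operator. The key computation, using Leibniz and $\Delta h = 0$ for $h \in \Har_{d-2i}$, is
\[\Delta(r^{2i} h) = 2i(n + 2d - 2i - 2)\, r^{2(i-1)} h,\]
so $r^2\Delta$ acts on $r^{2i}\Har_{d-2i}$ as multiplication by the scalar $c_i := 2i(n + 2d - 2i - 2)$. These scalars are pairwise distinct for $0 \leq i \leq \lfloor d/2 \rfloor$: the equation $c_i = c_j$ for $i \neq j$ reduces to $i + j = (n+2d-2)/2$, which is ruled out by $i + j \leq 2\lfloor d/2\rfloor - 1 \leq d - 1$ whenever $n \geq 1$ (the case $n = 0$ being trivial). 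Since $A$ is graded by part (a) and is preserved by multiplication by $r^2$ as well as by $\Delta$, the operator $r^2\Delta$ preserves $A_d$, and the desired decomposition of $A_d$ into eigenspaces follows by the same interpolation scheme as in (a). The only mildly delicate point in the whole argument is the pairwise distinctness of the $c_i$; the rest is routine Leibniz and commutator bookkeeping.
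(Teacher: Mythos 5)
Your proof is correct. Part (a) is essentially the paper's own argument: the paper notes that $A$ is preserved by the commutator $[\Delta,r^2]$, which is exactly the operator $4E+2n\operatorname{Id}$ appearing in your identity, and whose eigenspaces are the $\R[V]_d$; you simply make the interpolation (Vandermonde) step explicit. Part (b), however, takes a genuinely different route. The paper works with the inner product $\scal{\ ,}_d$ of \eqref{E:innerproduct}: using the adjointness $\scal{\Delta f,g}_{d-2}=\scal{f,r^2g}_d$ it identifies the orthogonal complement of $r^2A_{d-2}$ inside $A_d$ with $A_d\cap\Har_d$, and then finishes by induction on the power of $r^2$. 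You instead diagonalize the degree-preserving operator $r^2\Delta$, which preserves $A_d$ by part (a), computing its eigenvalue $c_i=2i(n+2d-2i-2)$ on $r^{2i}\Har_{d-2i}$ (your formula is the standard one and is correct), and you verify pairwise distinctness of the $c_i$ — your reduction of $c_i=c_j$ to $i+j=(n+2d-2)/2$, which is impossible for distinct $0\le i,j\le\lfloor d/2\rfloor$ when $n\ge 1$, is right, and this distinctness is indeed the one delicate point of your argument. What each approach buys: yours is self-contained, using only Euler's identity and the Leibniz computation of $\Delta(r^{2i}h)$, and never invokes the apolar pairing; the paper's is shorter because the inner product and the adjointness of $\Delta$ and multiplication by $r^2$ are already set up in Subsection \ref{SS:polynomials}, and it avoids all eigenvalue arithmetic, at the price of an induction that leans on the orthogonality of the decomposition \eqref{E:harmonic}.
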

\begin{proof}
\begin{enumerate}[(a)]
\item
Since $A$ is preserved by $\Delta$ and by multiplication with $r^2$, it is preserved by the Lie bracket $[\Delta, r^2]$ of these two linear maps. The latter is a linear endomorphism of $\R[V]$ with eigenspaces $\R[V]_d$, so  $A$ is graded.

\item Note that the $\scal{\ ,}_d$-orthogonal  complement of $r^2A_{d-2}$ in $A_d$ is exactly $A_d\cap\Har_d$. Indeed, $f\in A_d$ is orthogonal to  $r^2A_{d-2}$ if and only if $\Delta(f)=0$, because $\Delta(f)$ is an element of $A_{d-2}$ that satisfies 
$ \scal{\Delta(f),g}_{d-2}=\scal{f, r^2 g}_d$ for all $g\in A_{d-2}$. The result now follows by induction. 

\end{enumerate}
\end{proof}

\subsection{Partitions of vector spaces and maximal algebras}\label{SS:maximal}

Given a sub-algebra $A\In\R[V]$ (or, in fact, any subset), define the equivalence relation $\sim_A$ on $V$ by
\[ v\sim_A w \iff f(v)=f(w) \quad \forall f\in A\]
In words, $v,w$ are equivalent if and only if they cannot be separated by any element of $A$.

Denote by $\Le(A)$ the partition of $V$ into the equivalence classes (also called \emph{leaves}) of $\sim_A$. The symbol $\Le$ stands for {\it L}evel sets, because the elements of $\Le(A)$ are the common level sets of polynomials in $A$.

In the opposite direction, given a partition $\F$ of $V$, we define the sub-algebra $\B(\F)\In\R[V]$ as the algebra  of all ($\F$-)$\emph{basic}$ polynomials, that is, polynomials that are constant on the leaves of $\F$.

Given a partition $\F$ of $V$, a subset of $V$ is called \emph{$\F$-saturated} if it is a union of $\F$-leaves. 
Partially order partitions by $\F < \F'$ when $\F$ is \emph{coarser} than $\F'$, that is, when every $\F$-leaf is $\F'$-saturated. On the other hand, we partially order sub-algebras of $\R[V]$ by inclusion. Then both $\Le$ and $\B$ preserve these partial orders. Moreover, one has the tautologies $A\In\B(\Le(A))$ and $\Le(\B(\F))<\F$. In particular, $\B\circ\Le\circ\B=\B$ and $\Le\circ\B\circ\Le=\Le$.

\begin{definition}[\cite{MR19}]
A sub-algebra $A\In\R[V]$ is called \emph{maximal} if $A=\B(\Le(A))$, or, equivalently, if it is the algebra of basic polynomials of some partition of $V$.
\end{definition}

\begin{lemma}
\label{L:Bgraded}
Let $A\In\R[V]$ be a graded sub-algebra. Then $B=\B(\Le(A))$ is also graded.
\end{lemma}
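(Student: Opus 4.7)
The plan is to take an arbitrary $f \in B = \B(\Le(A))$, decompose it into its homogeneous components $f = \sum_d f_d$ with $f_d \in \R[V]_d$, and show that each $f_d$ is itself constant on the leaves of $\sim_A$, hence belongs to $B$. This amounts to showing the homogeneous decomposition of $\R[V]$ descends to $B$, which is exactly the definition of $B$ being graded.

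The first step, and the key observation, will be that the equivalence relation $\sim_A$ is invariant under common scaling: whenever $v \sim_A w$ and $t \in \R$, I claim $tv \sim_A tw$. This is where the hypothesis that $A$ is graded enters. Indeed, to check $h(tv) = h(tw)$ for an arbitrary $h \in A$, I can decompose $h = \sum_d h_d$ into homogeneous components $h_d \in A_d$ (using that $A$ is graded), and then $h(tv) = \sum_d t^d h_d(v) = \sum_d t^d h_d(w) = h(tw)$ since $v \sim_A w$ tests against each $h_d \in A$ individually.

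Once scaling invariance of $\sim_A$ is in place, the rest is immediate. Fix $v \sim_A w$. For every $t \in \R$ we have $tv \sim_A tw$, and since $f \in B$ is constant on leaves, $f(tv) = f(tw)$. Expanding both sides in $t$ yields the polynomial identity
\[
\sum_d t^d f_d(v) = \sum_d t^d f_d(w) \qquad \text{for all } t \in \R.
\]
Matching coefficients gives $f_d(v) = f_d(w)$ for every $d$, so each $f_d$ is constant on the leaves of $\sim_A$, i.e. $f_d \in B$. Therefore $B = \bigoplus_d (B \cap \R[V]_d)$, which is exactly the assertion that $B$ is graded.

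There is no real obstacle here: the entire content of the lemma is the scaling invariance of $\sim_A$, which follows formally from $A$ being a graded subspace and from the homogeneity of elements of $A_d$. It is worth noting that graded-ness of $A$ is used only in this one step, and that the proof does not require the sub-algebra structure of $A$ beyond the fact that each $A_d$ is a set of homogeneous polynomials.
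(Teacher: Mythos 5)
Your proof is correct and follows essentially the same route as the paper: both rest on the key observation that, because $A$ is graded, the scalings $v\mapsto \lambda v$ preserve the leaves of $\sim_A$, so a basic $f$ satisfies $f(\lambda v)=f(\lambda w)$ for all $\lambda$ whenever $v\sim_A w$. The only difference is the finishing step---the paper extracts the top homogeneous component by letting $\lambda\to\infty$ and then inducts on the degree, whereas you compare coefficients of the resulting polynomial identity in $t$, which is an equally valid (and slightly more direct) way to conclude that every homogeneous component of $f$ lies in $B$.
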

\begin{proof}
Since $A$ is graded, the homothetic transformations $h_\lambda:V\to V$ (defined by $h_\lambda(v)=\lambda v$) send $\Le(A)$-leaves onto $\Le(A)$-leaves, for all $\lambda\in\R\setminus\{0\}$.

Let $f\in B$ be a polynomial of degree $d$, and, for each $i=0,\ldots, d$, let $f_i$ be its homogeneous component of degree $i$. Thus $f=\sum_{i=0}^d f_i$ and so 
\[ \lambda^{-d} f\circ h_\lambda= \lambda^{-d}f_0+\cdots+\lambda^{-1} f_{d-1}+f_d \in B \] 
for all $\lambda\neq 0$. Taking $\lambda\to\infty$ shows that  $f_d\in B$, so that $f-f_d\in B$, and one can proceed by induction on the degree of $f$.
\end{proof}

\subsection{Laplacian algebras are maximal}
\label{SS:maximality}

\begin{proof}[Proof of Theorem \ref{MT:maximality}]
Let $B=\B(\Le(A))$. Assume for a contradiction that the inclusion $A\In B$ is strict. By Lemmas \ref{L:Agraded} and \ref{L:Bgraded}, the algebras $A$ and $B$ are graded, so there exists $d$ such that the inclusion $A_d\In B_d$ is strict. Let $f\in B_d\setminus\{0\}$ orthogonal to $A_d$ with respect to the inner product $\scal{\ ,}_d$ defined in \eqref{E:innerproduct}. 

Let $X=SV/\!\sim_A$ be the quotient topological space of the unit sphere $SV\In V$ by the equivalence relation $\sim_A$, which restricts to $SV$ because $r^2\in A$. Since $SV$ is compact, so is $X$. Let $C^0(X)$ denote the algebra of continuous real-valued functions on $X$, and consider the algebra homomorphism 
\[\varphi: B\to C^0(X)\]
where, given $h\in B$,  $\varphi(h)$ is  the function on $X$ induced by the restriction $h|_{SV}:SV\to \R$.

Then $\varphi(A)$ is a sub-algebra of $C^0(X)$ which separates points, and contains the constant functions. By the Stone--Weierstrass Theorem (see \cite[Theorem 7.32 on page 162]{Rudin}) $\varphi(A)$ is dense in $C^0(X)$ with respect to the supremum norm.

Since $f$ is homogeneous  and non-zero, we have $\int_{SV} f^2\, d\mathrm{vol}>0$. Choose $g\in A$ such that the $C^0$-distance
\[ \sup_{x\in X}| \varphi(g)(x) -\varphi(f) (x)| = \sup_{v\in SV} |g(v)-f(v)|\]
is small enough so that 
\begin{equation}
\label{E:L2>0}
\int_{SV} fg\, d\mathrm{vol}>0.
\end{equation}

Assume $d=\deg{f}$ even. (The case $d$ odd is analogous and is left to the reader.)
Since $f|_{SV}$ is an even function, we may assume $g$ is also even, that is, has only even degree homogeneous components. Indeed, $A$ is graded so that the even part of $g$ belongs to $A$, and the integral in \eqref{E:L2>0} does not change after replacing $g$ with its even part.
Moreover, by multiplying the homogeneous components of $g$ with appropriate powers of $r^2$, we may assume $g$ is a homogeneous element of $A$, $\deg(g)$ is an even number $\geq d$, and \eqref{E:L2>0} still holds.

On the other hand, for any $k$, the polynomial $r^{2k} f$ is a non-zero element of $B_{d+2k}$ which is $\scal{\ ,}_{d+2k}$-orthogonal  to $A_{d+2k}$, because $A$ is Laplacian. Thus, after replacing $f$ with $r^{2k} f$ for an appropriate choice of $k$ (so that \eqref{E:L2>0} still holds), we may assume $f,g$ are homogeneous of the same (even) degree $d$.

Decompose $f, g$ with respect to \eqref{E:harmonic}. That is, $f=\sum_i f_i$, and $g=\sum_i g_i$, with $f_i, g_i\in r^{2i}\Har_{d-2i}$. By Lemma \ref{L:Agraded}, $g_i\in A_d$ for every $i$. Since $f$ is orthogonal to $A_d$, Lemma \ref{L:Agraded} implies that each $f_i$ is orthogonal to $A_d$. Thus, by \eqref{E:L2}, we obtain $\int_{SV}fg\, d\mathrm{vol}=0$, contradicting \eqref{E:L2>0}.
\end{proof}

\section{Manifold submetries}\label{S:submetries}
For the reader's convenience, and to fix notations, we recall definitions and basic facts regarding submetries. For more information, see \cite{KL20, MR19} and references therein.

\begin{definition}\ 
\begin{itemize}
\item
A \emph{submetry} is a map between metric spaces which maps closed metric balls to closed metric balls of the same radius.
\item A \emph{manifold submetry} is a submetry from a Riemannian manifold to a metric space, such that each fiber is a possibly disconnected embedded smooth submanifold.
\item A \emph{spherical manifold submetry} is a manifold submetry from the unit sphere $SV$ in a Euclidean vector space $V$.
\item An \emph{infinitesimal manifold submetry} is a manifold submetry from a Euclidean vector space $V$, such that the origin is a fiber.
\end{itemize}
\end{definition}
We note the fibers of a submetry form a partition of the domain into closed equidistant subsets, and that, conversely, any such partition comes from a submetry. We also note that, given an infinitesimal manifold submetry $\sigma:V\to X$, the unit sphere $SV$ is a union of fibers, and the restriction $\sigma|_{SV}:SV\to \sigma(SV)$ is a spherical manifold submetry. Moreover, this procedure establishes a one-to-one correspondence between these two types of manifold submetries, see \cite[Appendix B1]{MR19}.

\begin{example}
Let $G$ be a compact group, and $V$ be an orthogonal $G$-representation. Then the quotient map $V\to V/G$ is an infinitesimal manifold submetry, and $SV\to SV/G$ is a spherical manifold submetry.
\end{example}

\begin{definition}
Given an infinitesimal manifold submetry $\sigma:V\to X$, denote by $\F_\sigma$ the partition of $V$ into the fibers of $\sigma$, and define its \emph{algebra of basic polynomials} by $\B(\sigma)=\B(\F_\sigma)$.
\end{definition}

We will frequently abuse notation and write ``let $(V,\F)$ be a manifold submetry'' when we mean ``let $\sigma:V\to X$ be a manifold submetry, and $\F=\F_\sigma$''.
\todo{removed proof of Corollary \ref{MC:correspondence}, and moved old Corollary 10 (now Corollary \ref{MC:IIT}) to the introduction}

\section{Separating versus generating invariants}
\label{S:separating}
In this section we prove a result (Theorem \ref{T:separating} below) that generalizes Corollary \ref{MC:separating} in two directions: the orthogonal representation is replaced with a manifold submetry, and the set $\mathcal{S}$ is only assumed to separate leaves on a certain open set.

\begin{definition}
Let $\sigma:V\to X$ be an infinitesimal manifold submetry, and $\mathcal{S}\In \B(\sigma)$ a subset of the algebra $\B(\sigma)$ of basic polynomials. 
\begin{itemize}
\item The set $\mathcal{S}$ is called a \emph{separating set} for $\sigma$ if it separates the fibers of $\sigma$, that is, if $\Le(\mathcal{S})=\F_\sigma$.
\item The set $\mathcal{S}$ is called a \emph{local separating set}  for $\sigma$ if there exists an open subset $U\In V$ with the following properties. (1) $U$ is $\Le(\mathcal{S})$-saturated, hence also $\F_\sigma$-saturated. And (2) $\mathcal{S}$ separates fibers of $\sigma$ contained in $U$, that is,  $\Le(\mathcal{S})|_U=\F_\sigma|_U$.
\end{itemize}
\end{definition}

\begin{example} 
To illustrate the requirement that the open set $U$ be saturated with respect to both $\Le(\mathcal{S})$ and $\F_\sigma$ in the definition above, consider $V=X=\R$, and $\sigma=\operatorname{Id}$. Then the leaves of  $\F_\sigma$ are points, and every polynomial on $V$ is $\sigma$-basic, so $\B(\sigma)=\R[t]$. The set $\mathcal{S}=\{t^2\}$ separates fibers of $\sigma$ contained in $U=(0,\infty)$, an open set which is $\F_\sigma$-saturated but not $\Le(\mathcal{S})$-saturated. In fact, $\mathcal{S}$ is \emph{not} a local separating set for $\sigma$.
\end{example}

In the context of group actions, sets of separating invariants have been intensely studied in the recent past, see \cite[Section 2.4]{DerksenKemper} and references therein. 

We will need  \cite[Proposition 37]{MR19}, which we state for the reader's convenience:
\begin{lemma}
\label{L:Lapiff}
Let $\mathcal{S}\In\R[V]$ be a subset containing $r^2$. Then the sub-algebra $B\In\R[V]$ generated by $\mathcal{S}$ is Laplacian if and only if $\Delta f$ and $\scal{\nabla f, \nabla g}$ belong to $B$, for every $f,g\in \mathcal{S}$.
\end{lemma}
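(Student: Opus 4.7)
The key identity throughout is the product rule for the Laplacian:
\[ \Delta(h_1 h_2) = h_1 \Delta h_2 + h_2 \Delta h_1 + 2 \langle \nabla h_1, \nabla h_2 \rangle, \]
which, solving for the middle term, also reads
\[ \langle \nabla h_1, \nabla h_2 \rangle = \tfrac{1}{2}\bigl(\Delta(h_1 h_2) - h_1 \Delta h_2 - h_2 \Delta h_1\bigr). \]
The forward direction is immediate from the second form: if $B$ is Laplacian and $f,g \in \mathcal{S} \subset B$, then $fg \in B$ gives $\Delta(fg)\in B$, hence $\langle \nabla f, \nabla g\rangle \in B$; and $\Delta f \in B$ trivially. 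Note also that $B\ni r^2$ by hypothesis.

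For the reverse direction, the plan is to use the standard trick that a subset of $B$ closed under sums and products and containing a generating set must equal $B$. First I will show that the hypothesis $\langle \nabla f, \nabla g\rangle \in B$ for $f,g\in\mathcal{S}$ upgrades to $\langle \nabla h_1, \nabla h_2\rangle \in B$ for all $h_1,h_2\in B$. Fix $g\in\mathcal{S}$ and set $T_g := \{ h\in B : \langle \nabla h, \nabla g\rangle \in B\}$. The Leibniz rule $\nabla(h_1 h_2) = h_1 \nabla h_2 + h_2 \nabla h_1$ shows that $T_g$ is closed under products, and it clearly contains $\mathcal{S}$ and constants, so $T_g = B$. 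Next, fix any $h \in B$ and set $T'_h := \{k \in B : \langle \nabla h, \nabla k\rangle \in B\}$. The previous step gives $\mathcal{S}\subset T'_h$, and the same Leibniz argument shows $T'_h$ is a subalgebra, so $T'_h = B$. Hence $\langle \nabla h_1, \nabla h_2\rangle \in B$ for all $h_1, h_2 \in B$.

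Finally, to show $\Delta$ preserves $B$, I consider $W := \{h\in B : \Delta h \in B\}$. By hypothesis $\mathcal{S} \subset W$, and clearly constants lie in $W$. If $h_1, h_2 \in W$, the product rule above expresses $\Delta(h_1 h_2)$ as a sum of three elements of $B$, using the previous paragraph to handle the cross term $\langle \nabla h_1, \nabla h_2\rangle$. Thus $W$ is a subalgebra of $B$ containing $\mathcal{S}$, so $W = B$, and together with $r^2\in B$ this shows $B$ is Laplacian.

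No step here is a serious obstacle; the only subtlety is the ordering of the two closure arguments, since one cannot directly close $\{h : \Delta h\in B\}$ under products without knowing the gradient pairings lie in $B$, which is precisely why the gradient-closure statement must be established first.
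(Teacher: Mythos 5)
Your proof is correct. Note that the paper itself offers no argument for this lemma: it simply cites \cite[Proposition 37]{MR19}, so there is no internal proof to compare against. Your write-up supplies the expected elementary argument in full: the forward direction via the polarization identity $\scal{\nabla f,\nabla g}=\tfrac12\bigl(\Delta(fg)-f\Delta g-g\Delta f\bigr)$ (which is exactly the identity the paper later uses in the proof of Theorem \ref{T:separating}), and the converse via three nested closure arguments --- first $T_g=\{h\in B:\scal{\nabla h,\nabla g}\in B\}$ for fixed $g\in\mathcal{S}$, then $T'_h$ for fixed $h\in B$, then $W=\{h\in B:\Delta h\in B\}$ --- each shown to be a subalgebra containing $\mathcal{S}$ by the Leibniz rule, hence equal to $B$. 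The one genuine subtlety, that the gradient-pairing closure must be established before the Laplacian closure because the product rule for $\Delta$ produces the cross term $\scal{\nabla h_1,\nabla h_2}$, is exactly the point you flag, and your ordering handles it. Together with $r^2\in\mathcal{S}\subset B$ this gives that $B$ is Laplacian, so the proof is complete and self-contained.
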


\begin{remark}
\label{R:quadratic}
Suppose $A$ is an algebra containing $r^2$ and generated by homogeneous polynomials of degree $2$, and let $A_2$ be its degree two graded part. Then, identifying $V=\R^n$ and quadratic polynomials $f\in\R[V]_2$ with the symmetric matrices $\Hess(f)/2\in\Sym^2(\R^n)$, Lemma \ref{L:Lapiff} implies that $A$ is Laplacian if and only if $A_2$ is closed under the standard Jordan product on $\Sym^2(\R^n)$ given by $M\bullet N= (MN+NM)/2$. This observation leads to a classification of Laplacian algebras generated by quadratic polynomials: they are essentially the ones given in Examples \ref{E:FFTorthogonal} and \ref{E:Clifford} below --- see \cite{MR20}.
\end{remark}

\begin{lemma}
\label{L:localglobal}
Let $\F, \F'$ be the decompositions of $V$ into the fibers of infinitesimal manifold submetries $\sigma, \sigma'$ from $V$, and let $U\In V$ be an open subset which is both $\F$- and $\F'$-saturated. If $\F|_U=\F'|_U$, then $\F=\F'$.
\end{lemma}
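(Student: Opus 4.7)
The plan is to translate the geometric equality $\F=\F'$ into the algebraic equality $A=A'$ of the basic Laplacian algebras $A=\B(\F)$ and $A'=\B(\F')$; by Theorems \ref{MT:correspondence} and \ref{MT:maximality}, these two statements are equivalent.

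First, I would reduce to a spherical picture via homothety invariance. Lemma \ref{L:Agraded} gives that $A$ and $A'$ are graded, so the partitions $\F$ and $\F'$ are invariant under the homotheties $v\mapsto\lambda v$ for $\lambda>0$. Replacing $U$ by its cone $\bigcup_{\lambda>0}\lambda U$ preserves openness, both saturation hypotheses, and the equality $\F|_U=\F'|_U$, so we may assume $U$ is a non-empty open cone. Its intersection $W=U\cap SV$ with the unit sphere is then a non-empty open subset of $SV$, saturated for both spherical manifold submetries and on which the corresponding partitions of $SV$ coincide.

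Second, given $f\in A$, I would aim to show $f\in A'$, which by symmetry forces $A=A'$. Consider the polynomial $F(p,q)=f(p)-f(q)$ on $V\times V$ and the set $R_{\F'}=\{(p,q)\in V\times V:p\sim_{\F'} q\}$. By the maximality of $A'$, $R_{\F'}$ coincides with the common zero set of the polynomials $g(p)-g(q)$ for $g\in A'$, and is in particular Zariski-closed. The hypothesis $\F|_U=\F'|_U$ combined with $f\in A$ shows that $F$ vanishes on the (relatively) open subset $R_{\F'}\cap(U\times U)\subset R_{\F'}$.

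The main obstacle is then to conclude that $F$ vanishes on the whole of $R_{\F'}$, which would yield $f\in A'$ and, symmetrically, $A=A'$. This reduces to the Zariski-density of $R_{\F'}\cap(U\times U)$ in $R_{\F'}$: if $C$ is an irreducible component of $R_{\F'}$, then $C\cap(U\times U)$ is either empty or a non-empty open subset of $C$, hence Zariski-dense in $C$. The crux of the argument is therefore to verify that every component $C$ meets $U\times U$, and for this I would use the continuity of fibers of a manifold submetry (nearby leaves converge in Hausdorff distance on bounded subsets) together with the homothety invariance of $\F'$ and the cone structure of $U$: any pair $(p,q)\in R_{\F'}$ can be approximated by pairs $(p_n,q_n)\in R_{\F'}\cap(U\times U)$ by rescaling into the cone $U$ and using the continuous dependence of $\F'$-leaves on their base points.
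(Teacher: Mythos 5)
Your reduction to a saturated open cone, and the reformulation ``show $F(p,q)=f(p)-f(q)$ vanishes on all of $R_{\F'}$, knowing it vanishes on $R_{\F'}\cap(U\times U)$'', are fine (note that identifying $R_{\F'}$ with the common zero set of $\{g(p)-g(q)\mid g\in A'\}$ uses the fact that $\B(\sigma')$ \emph{separates} the fibers of $\sigma'$, i.e.\ $\Le(A')=\F'$ --- this is the separation theorem of \cite{LR18,MR19}, not maximality of $A'$, but it is available). The genuine gap is exactly at the step you single out as the crux, and the argument you propose for it fails. After your reduction, $U$ is a \emph{fixed} $\F'$-saturated open cone, and rescaling preserves it; so a pair $(p,q)\in R_{\F'}$ with $p\notin\overline{U}$ stays at distance at least $\dist(p,\overline{U})>0$ from $U\times U$, and continuity of leaves in the Hausdorff sense only controls leaves near a given leaf --- it cannot move you into $U$. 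Concretely, for the $\Z_2$-action $x\mapsto -x$ on $\R^2$ and $U$ a symmetric open double cone about the $x$-axis (open, saturated, a cone, not dense), the pairs $(p,-p)$ with $p\notin\overline{U}$ are not limits of pairs in $R_{\F'}\cap(U\times U)$, so the claimed Euclidean density of $R_{\F'}\cap(U\times U)$ in $R_{\F'}$ is false in general. In addition, the component-wise step has its own problem over $\R$: a non-empty Euclidean-open subset of an irreducible real algebraic set need not be Zariski-dense (the local dimension of the real point set can drop, as at the isolated real point of $y^2=x^2(x-1)$), so even establishing that every component meets $U\times U$ would not immediately close the argument.

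What is missing is a mechanism that propagates the equality of the two partitions from the saturated open set $U$ to all of $V$, and this is precisely the geometric input the paper uses: by the horizontal-lift/equifocality property of manifold submetries (\cite[Proposition 14(4)]{MR19}), given $x\sim_{\F}y$ one takes a minimizing geodesic $\gamma$ from $x$ to an $\F$-leaf $L\In U$ and a vector $v$ at $y$ with the same projection as $\gamma'(0)$ in $V/\F$; then $\gamma(t)$ and $y+tv$ lie on a common $\F$-leaf for all $t$, hence on a common $\F'$-leaf for $t$ near $l$ (where the leaves lie in $U$ and $\F|_U=\F'|_U$), and applying the same property to $\F'$ backwards along the two lines gives $x\sim_{\F'}y$; exchanging $\F$ and $\F'$ finishes the proof. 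Your purely algebraic framing does not by itself supply such a propagation mechanism, so as it stands the proposal has a genuine gap; if you want to pursue it, you would need to prove Zariski-density of $R_{\F'}\cap(U\times U)$ in $R_{\F'}$ by some argument of this geometric kind, at which point you are essentially redoing the paper's proof.
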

\begin{proof}
Let $x,y\in V$ on the same $\F$-leaf. Let $L\In U$ an $\F$-, hence also $\F'$-leaf. Take a minimizing geodesic $\gamma:[0,l]\to V$ from $x$ to $L$, with $\gamma(0)=x$, and $\gamma(l)\in L$. Choose a vector $v\in T_yV$ whose image in (the appropriate space of directions in) the quotient $V/\F$ coincides with the image of $\gamma'(0)$.

By \cite[Proposition 14(4)]{MR19}, $\gamma(t)$ and $y+tv$ belong to the same $\F$-leaf for all $t$. In particular, there is $\epsilon>0$ such that, for all $t\in(l-\epsilon, l+\epsilon)$, the points $\gamma(t)$ and $y+tv$ belong to the same $\F'$-leaf. Thus $\gamma'(l)$ and $v\in T_{y+lv}$ map to the same vector in $V/\F'$.

Applying  \cite[Proposition 14(4)]{MR19} again, we conclude that $x=\gamma(0)$ and $y$ belong to the same $\F'$-leaf.

Thus $\F'$ is coarser than $\F$. Reversing the roles of $\F,\F'$ in the argument above yields $\F$ coarser than $\F'$, therefore $\F=\F'$.
\end{proof}

\begin{theorem}
\label{T:separating}
Let $\sigma$ be an infinitesimal manifold submetry from $V$, and $\mathcal{S}$ a local separating set for $\sigma$ containing $r^2$. Then the sub-algebra $B\In\R[V]$ generated by $\mathcal{S}$ coincides with $\B(\sigma)$ if and only if $\Delta f$ and $\scal{\nabla f, \nabla g}$ belong to $B$, for every $f,g\in \mathcal{S}$.
\end{theorem}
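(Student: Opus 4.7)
The plan is to handle the biconditional one direction at a time. The forward direction is essentially formal: if $B=\B(\sigma)$, then $B$ is a Laplacian algebra, either by Theorem \ref{MT:correspondence} or directly from \cite[Theorem A]{MR19}. Since $B$ is generated by $\mathcal{S}\ni r^2$, Lemma \ref{L:Lapiff} immediately delivers the differential closure conditions $\Delta f,\, \scal{\nabla f, \nabla g}\in B$ for all $f,g\in\mathcal{S}$. No use of the local separating property is needed for this direction.

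The reverse direction is where the new content enters, and I would proceed in three steps. First, I invoke Lemma \ref{L:Lapiff} in the other direction to conclude that the algebra $B$ generated by $\mathcal{S}$ is Laplacian. Second, I apply the main result Theorem \ref{MT:maximality} to upgrade Laplacian to maximal, so that $B=\B(\Le(B))$. A short check --- namely, that polynomial operations on the generators cannot distinguish points not already separated by $\mathcal{S}$ --- gives $\Le(B)=\Le(\mathcal{S})$. Via Theorem \ref{MT:correspondence}, there is then a unique infinitesimal manifold submetry $\sigma'$ with $\B(\sigma')=B$ and $\F_{\sigma'}=\Le(B)=\Le(\mathcal{S})$.

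Third, I would upgrade the local separation hypothesis to the global equality $\F_{\sigma'}=\F_\sigma$ via Lemma \ref{L:localglobal}. The open set $U$ from the definition of local separating set is $\Le(\mathcal{S})$-saturated (hence $\F_{\sigma'}$-saturated) and $\F_\sigma$-saturated by hypothesis, and satisfies $\F_{\sigma'}|_U=\Le(\mathcal{S})|_U=\F_\sigma|_U$. So Lemma \ref{L:localglobal} applied to the pair $\sigma',\sigma$ yields $\F_{\sigma'}=\F_\sigma$ on all of $V$. Consequently $B=\B(\F_{\sigma'})=\B(\F_\sigma)=\B(\sigma)$, as desired.

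The main obstacle is really Theorem \ref{MT:maximality} itself, which carries all the analytic content of the argument; once it is available, the rest is bookkeeping along the Galois-type correspondence $A\leftrightarrow \Le(A)$ between sub-algebras and partitions. The only point that requires attention is verifying the saturation hypotheses of Lemma \ref{L:localglobal}, but these are built directly into the definition of local separating set via the identification $\Le(\mathcal{S})=\F_{\sigma'}$, so no extra work is needed.
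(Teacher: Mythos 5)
Your proposal is correct and follows essentially the same route as the paper: Lemma \ref{L:Lapiff} to see $B$ is Laplacian, the correspondence of Theorem \ref{MT:correspondence} (resting on Theorem \ref{MT:maximality}) to realize $B=\B(\sigma')$, and Lemma \ref{L:localglobal} together with the saturation built into the definition of local separating set to conclude $\F_{\sigma'}=\F_\sigma$. The only cosmetic difference is that you identify $\F_{\sigma'}=\Le(B)=\Le(\mathcal{S})$ outright, whereas the paper sandwiches $\Le(\mathcal{S})<\F_{\sigma'}<\F_\sigma$ and restricts to $U$; both hinge on the same facts from \cite{MR19}.
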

\begin{proof}
Suppose $B=\B(\sigma)$. Then, by \cite[Theorem A]{MR19}, $B$ is Laplacian, and therefore $\Delta f$ and \[\scal{\nabla f, \nabla g}=\frac{\Delta(fg) -f\Delta g -g\Delta f}{2}\]
 belong to $B$, for every $f,g\in \mathcal{S}$.

For the converse implication, assume $\Delta f$ and $\scal{\nabla f, \nabla g}$ belong to $B$, for every $f,g\in \mathcal{S}$. By Lemma \ref{L:Lapiff}, $B$ is a Laplacian algebra. By  Corollary \ref{MC:correspondence}, $B=\B(\sigma')$ for some manifold submetry $\sigma'$. 

Denoting by $\F, \F'$ the fiber decompositions associated to $\sigma, \sigma'$, the fact that $\mathcal{S}\In B\In \B(\sigma)$ implies that $\Le(\mathcal{S})<\F'<\F$ (recall that ``$<$'' means ``coarser than'').

By definition of local separating, there exists an $\Le(\mathcal{S})$-saturated open subset of $V$ (hence both $\F$-, and $\F'$-saturated) such that $\Le(\mathcal{S})|_U=\F|_U$. Thus $\F'|_U=\F|_U$, which, by Lemma \ref{L:localglobal}, implies that $\F'=\F$. In particular, $B=\B(\F')=\B(\F)=\B(\sigma)$.
\end{proof}

\todo{removed proof of Corollary \ref{MC:separating}}

\subsection{Applications to First Fundamental Theorems}
\label{SS:FFT}

We collect here a few simple examples which illustrate how Corollary \ref{MC:separating} and Theorem \ref{T:separating} can be used to prove First Fundamental Theorems, that is, to prove that certain sets of invariant (respectively basic) polynomials actually generate the algebra of all invariant (respectively basic) polynomials. Our method is loosely analogous to the method illustrated in \cite[Section 5]{Schrijver08} to prove FFT's for general tensors (as opposed to symmetric tensors, that is, polynomials).  

\begin{example}
\label{E:FFTorthogonal}
The standard diagonal action of $G=\OO(n)$ (respectively, $\U(n)$, $\Sp(n)$) on $V= (\R^n)^k$ (respectively $(\C^n)^k$, $(\HH^n)^k$). Generators for the algebra of invariants $\R[V]^G$ are given by the polynomials $f_{ij}(v_1, \ldots, v_k)=\langle v_i, v_j\rangle$, and similarly in the complex and quaternionic cases. This is sometimes called Weyl's First Fundamental Theorems for $\OO(n)$ (respectively $\U(n)$, $\Sp(n)$), see \cite{Weyl}.

Indeed, these polynomials are clearly $G$-invariant, and it is an elementary fact in Linear Algebra that they separate the $G$-orbits. Moreover, being quadratic polynomials, it follows that $\Delta(f_{ij})$ are constant, hence  belong to the algebra generated by $f_{ij}$. Finally, a simple computation shows that $\langle \nabla f_{ab}, \nabla f_{cd}\rangle$ belongs to the span of $\{f_{ij}\}$ for every $a,b,c,d$.
\end{example}

\begin{example}
\label{E:FFTSO}
The standard diagonal action of $G=\SO(n)$ on $V= (\R^n)^k$. A set of generators for $\R[V]^G$ is given by the $f_{ij}$ from the previous example when $k<n$. If $k\geq n$, one needs to add certain polynomials $h_S$, one for each $n$-element subset $S=\{s_1, \ldots, s_n\}\In \{1, \ldots, k\}$, defined by $h_S(v_1, \ldots v_k)=\det(v_{s_1}, \ldots v_{s_n})$.

Indeed, it is easy to see that these polynomials are $G$-invariant, and separate $G$-orbits. Moreover, $\Delta h_S=0$ because each variable only appears once, and $\langle \nabla h_S, \nabla h_T \rangle$, being $\OO(n)$-invariant, can be written as polynomials in the $f_{ij}$, by the previous example. Finally, the fact that the gradient of the determinant function is the adjugate matrix, together with Cramer's rule, shows that $\langle \nabla f_{ij}, \nabla h_S\rangle$ vanishes if ${|S\cap \{i,j\}|=0}$ or $2$, and equals $\pm h_{j\cup S\setminus i}$ if $i\in S$ and $j\not\in S$.
\end{example}

\begin{example}\label{E:Clifford}
Clifford foliations. Let $P_0, \ldots P_m$ be a Clifford system, that is, a set of $2l\times 2l$ real symmetric matrices such that $P_i^2=I$ for every $i$, and $P_i P_j=-P_j P_i$ for every $i\neq j$. The associated Clifford foliation in $\R^{2l}$ is defined as $\Le(\{r^2, f_0, \ldots, f_m\})$, where $f_i(x)=\langle P_i x, x \rangle$, see \cite{Radeschi14}. Then the algebra of basic polynomials is generated by the quadratic polynomials $r^2, f_0, \ldots f_m$.  (This was originally proved in \cite{MR20}, with a considerably more complicated argument.)

Indeed, they separate leaves by definition, their Laplacians are constant, and
$ \langle \nabla f_i, \nabla f_j \rangle(x)=\langle (P_i P_j + P_j P_i)x, x \rangle$, which equals either $0$ or $2\|x\|^2$, according to whether $i\neq j$ or $i=j$. 
\end{example}

\section{Polarizations}\label{S:pols}

Polarizations are a classical tool in Invariant Theory, used to produce multi-variable invariants from single-variable invariants. We introduce a generalization and show that in many situations these ``generalized polarizations'' are enough to generate all multi-variable invariants. For a general reference on (classical) polarizations, see \cite[Section 7.1]{KraftProcesi} and \cite{Weyl}.

\subsection{Definitions}\label{SS:poldefs}

\begin{definition}
Let $\mathcal{S}$ be an arbitrary subset of $\R[V]$. The \emph{Laplacian sub-algebra generated by} $\mathcal{S}$ is defined to be the smallest Laplacian sub-algebra $A$ of $\R[V]$ that contains $\mathcal{S}$. That is, $A$ is the intersection of all Laplacian sub-algebras of $\R[V]$ containing $\mathcal{S}$.
\end{definition}

\begin{remark}\label{R:quadratic2}
In the notation of Remark \ref{R:quadratic}, if the elements of $\mathcal{S}$ are homogeneous polynomials of degree two, then the Laplacian algebra generated by $\mathcal{S}$ is the sub-algebra of $\R[V]$ generated by the smallest Jordan sub-algebra of $\Sym^2(\R^n)\simeq \R[V]_2$ containing $\mathcal{S}$ and $\operatorname{Id}$.
\end{remark}

\begin{remark}
If $\mathcal{S}$ generates $A$ as a Laplacian algebra as in the definition above, one can produce a larger set which generates $A$ as an algebra in the following way. In particular, if $\mathcal{S}$ is a local separating set for an infinitesimal manifold submetry $\sigma$, this produces a generating set for $A=\B(\sigma)$ by Theorem \ref{T:separating}.

Let $\mathcal{S}_1=\mathcal{S}\cup \{r^2\}$.  For $l\geq 2$, define $\mathcal{T}_l$ as all the elements of
\[\Delta \mathcal{S}_{l-1} \cup \{\langle \nabla f, \nabla h\rangle \mid f,h\in \mathcal{S}_{l-1} \} \]
 that do not belong to the algebra generated by $\mathcal{S}_{l-1}$, and define $\mathcal{S}_l=\mathcal{S}_{l-1}\cup\mathcal{T}_l$.

Then $\cup_{l=1}^\infty \mathcal{S}_{l}$ generates $A$ as an algebra by Lemma \ref{L:Lapiff}. Moreover, since $A$ is finitely generated by \cite[Lemma 24]{MR19}, the nested sequence $\mathcal{S}_l$ stabilizes, and $A$ is generated, as an algebra, by $\mathcal{S}_l$ for some $l$ that depends only on $\mathcal{S}$.
\end{remark}

\begin{definition}
Let $A\In\R[V]$ a Laplacian sub-algebra, associated to the infinitesimal manifold submetry $\F$, and $k\geq 1$ an integer. The algebra of \emph{generalized polarizations} on $V^k$, denoted $A^{(k)}$, is the Laplacian sub-algebra of $\R[V^k]$ generated by the polynomials $f_{ij}(v_1, \ldots, v_k)=\langle v_i, v_j\rangle$, and the polynomials $(v_1, \ldots, v_k)\mapsto f(v_1)$, for all $f\in A$.

Denote by $\F^{(k)}$ the infinitesimal manifold submetry of $V^k$ corresponding to $A^{(k)}$ according to  Corollary \ref{MC:correspondence}.
\end{definition}

Note that the subspaces $V_1=V\times \{0\} \times \cdots \times \{0\}$, $V_2=\{0\}\times V\times \{0\} \times \cdots \times \{0\}$, etc are $\F^{(k)}$-saturated (also called $\F^{(k)}$-invariant). In particular, $A^{(k)}$ is multi-graded with respect to the product structure of $V^k$, by \cite[Proposition 19]{MR20}.

Note also that, if $\F$ is homogeneous, given by the orbits of some $G\In\OO(V)$, then $A^{(k)}$ is contained in the algebra of invariants $\R[V^k]^G$, where $G$ acts on $V^k$ diagonally. In particular, $A\mapsto A^{(k)}$ can be seen as a procedure that produces a set of multi-variable invariants from the single-variable invariants.

\begin{example} Classical polarizations. Fix an orthonormal basis $x^1, \ldots, x^n$ of $V^*$, and obtain from it an orthonormal basis \[\{x_i^a\ | \ i=1,\ldots, k,\ a=1,\ldots, n\}\] for $(V^k)^*$. Then, for any $1\leq i,j\leq k$, the \emph{classical polarization differential operator} $P_{ij}$ on $\R[V^k]$, defined by
\[ P_{ij}=\sum_{a=1}^n x_i^a \frac{\partial}{\partial x_j^a}, \]
preserves $A^{(k)}$. Indeed, for any $H=H(x_1, \ldots x_k)\in A^{(k)}$, which we may assume to be (multi-)homogeneous, we have
\[ \langle \nabla f_{ij}, \nabla H\rangle_{V^k} = P_{ij} H + P_{ji} H \in A^{(k)}\]
because  $A^{(k)}$ is Laplacian. Since $P_{ij} H$ and $P_{ji} H$ are homogeneous of different multi-degrees (unless $i=j$, in which case they are equal), and $A^{(k)}$ is multi-graded, they both belong to $A^{(k)}$. 

Given $f\in A$, homogeneous of degree $d$, define $F(x_1, \ldots, x_n)=f(x_1)$, which is in $ A^{(k)}$ by definition. The polynomials obtained from such $F$ by repeated application of classical polarization operators 
are called \emph{classical polarizations}, and the algebra $\operatorname{pol}(A,k)\subset A^{(k)}$ generated by all classical polarizations of all $f\in A$ is called the algebra of classical polarizations. Note that since the classical polarization operators are of order one, they satisfy the Leibniz rule, and so we would obtain the same algebra $\operatorname{pol}(A,k)$ if we only let $f$ run through a set of generators of $A$. Moreover,  $\operatorname{pol}(A,k)$ equals the smallest sub-algebra of $\R[V]$ that contains all polynomials $F$ of the form above, and is preserved by the polarization operators.

In particular, note that $(P_{i1})^d(F)(x_1, \ldots, x_k)=d! f(x_i)$ is a classical polarization. Thus, the restriction $A^{(k)}|_{V_i}$ of $A^{(k)}$ to each invariant subspace $V_i\subset V^k$ contains a copy of $A$. If $\F$ is homogeneous, then $A^{(k)}|_{V_i}=A$. In contrast, for inhomogeneous $\F$ and $k\geq 2$, the authors do not know of a single example where these two algebras coincide, see Subsection \ref{SS:homogeneity} below.
\end{example}

\begin{example} \label{E:Wallach} Wallach's polarizations \cite[Appendix 2]{Wallach93}. Given $i,j$, and $f(x)\in A$ homogeneous, let $F(x_1, \ldots, x_k)=f(x_i)$, and define the \emph{Wallach polarization operator}, denoted $P^f_{ij}$, by
\[ P^f_{ij}=\sum_{a=1}^n \frac{\partial F}{\partial x_i^a} \frac{\partial}{\partial x_j^a}.\]
Note that for $f=r^2/2$, one recovers the classical operator $P_{ij}$. 

The operators $P^f_{ij}$ preserve $A^{(k)}$. Indeed, let $Q=P_{ji}(F)\in A^{(k)}$. Then, for any multi-homogeneous $H\in A^{(k)}$, we have
\[A^{(k)}\ni \langle \nabla Q, \nabla H\rangle_{V^k} = P^f_{ij}(H) + \sum_{a,b} x_j^a \frac{\partial^2 F}{\partial x_i^a\partial x_i^b}\frac{\partial H}{\partial x_i^b}.\]
If $i=j$, the two summands above coincide, while for $i\neq j$, they are homogeneous with different multi-degrees. In either case, $P^f_{ij}(H)\in A^{(k)}$.

In particular, if we put $H(x_1,\ldots, x_k)=h(x_j)$ for some $h\in A$, we obtain the generalized polarization $P^f_{ij}(H)$, given by 
\[(x_1,\ldots, x_k)\mapsto \langle (\nabla f)(x_i),  (\nabla h)(x_j) \rangle_V.\]

\end{example}

\subsection{Homogeneity of generalized polarizations}
The goal of this subsection is to prove Theorem \ref{MT:hompol}.

\begin{definition}[$k$ normal spaces]
\label{D:k-NS}
Let $(V,\F)$ be an infinitesimal manifold submetry. We say $\F$ satisfies the \emph{$k$ normal spaces} condition, abbreviated $k$-NS, if there exists a non-empty open subset $U\In V^k$ such that, for all $(x_1, \ldots, x_k)\in U$, we have
\[ \nu_{x_1}(L_{x_1}) + \cdots + \nu_{x_k}(L_{x_k}) =V, \] 
where for any $x\in V$, $\nu_{x}(L_{x})\In V$ denotes the normal space to the $\F$-leaf $L_x$ at $x$. 

If $G\subset \OO(V)$ is a compact subgroup, we say $G$ satisfies $k$-NS if its orbit decomposition does.
\end{definition}

\begin{remark} 
\label{R:k-NS}The following are immediate consequences of the definition.
If $\F$ satisfies $k$-NS, then it also satisfies $l$-NS for all $l\geq k$. Denoting by $\F^0$  the decomposition of $V$ into the connected components of the $\F$-leaves,  $\F^0$ satisfies $k$-NS if and only if $\F$ does. If $\F<\F'$ (coarser than) and $\F$ satisfies $k$-NS, then so does $\F'$.
\end{remark}

Recall that the \emph{principal stratum} of an infinitesimal manifold submetry $(V,
F)$ is the subset $V_0\subset V$ of all points $x\in V$ such that $\nu_x(L_x)$ is spanned by $\{ \nabla f(x) \mid f\in \B(\F)\}$. It is non-empty and Zariski-open, in particular it is dense in the Euclidean topology.

The next lemma implies that $k$-NS is equivalent to the condition that $k$ \emph{generic} normal spaces span $V$, and will ensure that Theorem \ref{T:separating} may be applied in the proof of Theorem \ref{MT:hompol} below.

\begin{lemma}
\label{L:k-NS}
Let $(V,\F)$ be an infinitesimal manifold submetry satisfying the $k$-NS condition. Then the open subset $U$ in Definition \ref{D:k-NS} may be assumed to be dense, $\F^{(k)}$-saturated, and contained in $(V_0)^k$, where $V_0$ denotes the principal stratum of $\F$.
\end{lemma}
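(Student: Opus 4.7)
The plan is to show that the specific set
\[ W = \{(x_1, \ldots, x_k) \in (V_0)^k \mid \nu_{x_1}(L_{x_1}) + \cdots + \nu_{x_k}(L_{x_k}) = V\} \]
itself has all three properties. By \cite[Lemma 24]{MR19}, the algebra $A = \B(\F)$ is finitely generated; I fix generators $f_1, \ldots, f_N$. By the very definition of the principal stratum, $\nu_x(L_x) = \operatorname{span}\{\nabla f_j(x) \mid j = 1, \ldots, N\}$ for every $x \in V_0$. Since $V_0$ is Zariski-open, it is Euclidean-dense in $V$, so $(V_0)^k$ is dense in $V^k$; hence the given $k$-NS open set $U$ satisfies $U \cap (V_0)^k \neq \emptyset$, and any point in this intersection lies in $W$, so $W \neq \emptyset$.

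For openness and density, I would translate the condition defining $W$ into the polynomial condition that the $Nk$ gradients $\nabla f_j(x_i)$ span $V$, i.e., that some $(\dim V) \times (\dim V)$ minor of the matrix with these columns is non-zero. Together with the Zariski-openness of $(V_0)^k$ in $V^k$, this makes $W$ Zariski-open in $V^k$. Being non-empty in the irreducible affine variety $V^k$, the complement $V^k \setminus W$ is a proper real algebraic subset and therefore has Lebesgue measure zero, so $W$ is Euclidean-dense in $V^k$.

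For $\F^{(k)}$-saturation---the step I expect to be the main obstacle---I would use the Gram matrix $G(x)$ of the family $\{\nabla f_j(x_i)\}_{i,j}$, whose entries are the functions $(x_1, \ldots, x_k) \mapsto \langle \nabla f_j(x_i), \nabla f_{j'}(x_{i'})\rangle_V$. By Example \ref{E:Wallach} on Wallach's polarizations, each such entry belongs to $A^{(k)}$. Since $\dim \sum_i \nu_{x_i}(L_{x_i}) = \operatorname{rank} G(x)$ on $(V_0)^k$ and the rank depends only on the entries of $G$, the spanning condition is determined by values of functions in $A^{(k)}$, and so defines an $\F^{(k)}$-saturated subset of $V^k$. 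Intersecting with $(V_0)^k$---which is itself $\F^{(k)}$-saturated because $V_0$ is $\F$-saturated, so $(V_0)^k$ is $\F^k$-saturated, and $\F^{(k)}$ refines $\F^k$---yields $W$, showing it is $\F^{(k)}$-saturated. The crux is thus the Wallach polarization identity: it packages all cross-variable inner products of gradients as elements of $A^{(k)}$, which is precisely what converts the geometric spanning condition into an $A^{(k)}$-definable one.
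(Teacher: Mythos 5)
Your argument is correct and essentially identical to the paper's: both take the set of $k$-tuples in $(V_0)^k$ at which the gradients of a finite generating set of $A$ span $V$, use the Wallach-polarization identity to see that this spanning condition is cut out by functions in $A^{(k)}$ (hence is $\F^{(k)}$-saturated), and conclude density from non-empty Zariski-openness. The only cosmetic difference is the treatment of the factor $(V_0)^k$: the paper encodes membership in $V_0$ via minors of the single-point Gram matrices $\langle \nabla\rho_a(x_i),\nabla\rho_b(x_i)\rangle$, which are again generalized polarizations, whereas you invoke the $\F$-saturation of $V_0$ together with the fact that $\F^{(k)}$ refines the product partition; both rest on the same standard description of the principal stratum.
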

\begin{proof}
Let $\rho_1, \ldots \rho_r \in A$ be a generating set. Since $\nabla f (x)\in \nu_{x}(L_{x})$ for every $x\in V$ and $f\in A$, the set
\[ U'= \left\{ (x_1, \ldots, x_k)\in (V_0)^k\ |\ \operatorname{span}\{\nabla\rho_a(x_b)\ |\ 1\leq a\leq r,\ 1\leq b\leq k\}=V \right\}\]
satisfies the condition in Definition \ref{D:k-NS}. Moreover, it is the complement of a Zariski-closed set, defined by polynomial equations in the polynomials $\langle (\nabla \rho_a)(x_i),  (\nabla \rho_b)(x_j) \rangle_V$. Since these are generalized polarizations by Example \ref{E:Wallach}, we conclude that $U'$ is $\F^{(k)}$-saturated. Finally, $U'$ contains $U\cap (V_0)^k$. In particular, the Zariski-open set $U'$ is non-empty, therefore dense.
\end{proof}

 \todo{moved old Thm (now Theorem \ref{MT:hompol}) to the Intro now}
\begin{proof}[Proof of Theorem \ref{MT:hompol}]
Let $\F=\F_\sigma$ be the decomposition of $V$ into $\sigma$-fibers, and $G=\OO(\sigma)=\OO(\F)$. Since  $A\In\R[V]^G$, we have $A^{(k)}\In\R[V^k]^G$. With Theorem \ref{T:separating} in mind, in order to prove the equality $A^{(k)}=\R[V^k]^G$, it suffices to show that $A^{(k)}$ is a local separating set for the decomposition of $V^k$ into the $G$-orbits, because $A^{(k)}$ is Laplacian by definition.

Let $U\In V^k$ be an open subset as in Definition \ref{D:k-NS}, which, by Lemma \ref{L:k-NS}, we may assume to be $\F^{(k)}$-saturated and contained in $(V_0)^k$. We will show that $A^{(k)}$ separates $G$-orbits contained in $U$. Explicitly, given 
\[(x_1, \ldots, x_k), (y_1, \ldots, y_k)\in U,\]
 such that every generalized polarization $H\in A^{(k)}$ takes the same value on them, that is, $H(x_1, \ldots, x_k)=H(y_1, \ldots, y_k)$, we will construct an orthogonal transformation $g\in \OO(V)$ such that $g(x_1, \ldots, x_k)=(y_1, \ldots, y_k)$, and then prove that $g\in G$.

Let $\rho_1, \ldots \rho_r \in A$ be a generating set. We may assume $\rho_1=r^2/2$. Consider the following two $kr$-tuples of vectors in $V$:
\[ (\nabla\rho_a(x_i))_{a,i} \qquad  (\nabla\rho_a(y_i))_{a,i}\]
The inner product of any two entries in the first tuple coincides with the inner product of the corresponding entries in the second tuple, because such inner products are generalized polarizations, see Example \ref{E:Wallach}. By Weyl's First Fundamental Theorem for the orthogonal group (see Example \ref{E:FFTorthogonal}), there exists $g\in\OO(V)$ such that 
$g\nabla\rho_a(x_i) =\nabla\rho_a(y_i)$ for all $a,i$. Since $\rho_1=r^2/2$, this implies that $gx_i=y_i$ for all $i$. Incidentally, $g$ is uniquely determined because $\{\nabla\rho_a(x_i)\}_{a,i}$ span $V$.

It remains to show that $g\in G$, that is, that $g$ takes each $\F$-leaf to itself. Let $v\in V$ be arbitrary. We will show that $x_1+v$ and $g(x_1+v)=y_1+gv$ belong to the same $\F$-leaf. By the $k$-NS assumption, there exist scalars $\lambda_{ai}$ such that 
\[v=\sum_{a,i} \lambda_{ai}\nabla\rho_a(x_i).\]

We will need the following Claim, which follows immediately from the definition of the Wallach polarization operators (see Example \ref{E:Wallach}) :\\
{\bf Claim:} 
Let $H\in A^{(k)}$ be an arbitrary generalized polarization. Consider the function $\Omega:\R\times V^k\to \R$ given by
\[\Omega(t, z_1, \ldots, z_k)=H\left(z_1 + t\sum_{a,i} \lambda_{ai}\nabla\rho_a(z_i),\ \ z_2, \ \ \ldots\ \ ,\ \  z_k \right)\]
Then 
\[\frac{\partial\Omega}{\partial t}(t, z_1, \ldots, z_k)=\tilde{H}\left(z_1 + t\sum_{a,i} \lambda_{ai}\nabla\rho_a(z_i),\ \  z_2, \ \ \ldots\ \ ,\ \ z_k \right)\]
where $\tilde{H}=\sum_{a,i} \lambda_{ai}P^{\rho_a}_{i1}(H)\in A^{(k)}$. In particular, $\frac{\partial^b\Omega}{\partial t^b}|_{t=0} \in A^{(k)}$ 
for all $b$.

We apply the Claim above to the case where $H(z_1, \ldots, z_k)=h(z_1)$ for an arbitrary $h\in A$. It follows that the polynomials $t\mapsto h(x_1+tv)$ and  $t\mapsto h(g(x_1+tv))$ are equal, because they have the same derivatives, of all orders, at $t=0$. Since $h\in A$ is arbitrary, this shows that $x_1+v$ and $g(x_1+v)$ belong to the same orbit, and, since $v$ was arbitrary, we conclude  that $g\in G$.
\end{proof}

We finish this subsection with a couple of open questions regarding the $k$-NS condition.
\begin{question}
When $k\geq 2$, can the hypothesis that $\F$ satisfy the $k$-NS condition be dropped from Theorem \ref{MT:hompol}?
\end{question}

\begin{question}
Is the $k$-NS condition equivalent to $k\dim(V/\F)\geq \dim(V)$?
\end{question}

\subsection{When generalized polarizations generate}
Let $G\In \OO(V)$ be a closed subgroup, with algebra of invariants $A=\R[V]^G$, $k$ a positive integer, and let $A^{(k)}$ be the algebra of generalized polarizations. As mentioned earlier, we have $A^{(k)}\subset \R[V^k]^G$.  The objective of this subsection is to give sufficient conditions for equality to hold, and in particular to prove Theorem \ref{MT:polarization2}. 

As mentioned in the Introduction, equality cannot hold in full generality, because generalized polarizations depend only on the algebra of invariants $A=\R[V]^G$, which in turn only depends on the $G$-orbits, and not on $G$ itself. On the other hand, for $k=\dim(V)$ (and hence for all $k\geq\dim(V)$), the group $G$ itself is a $G$-orbit, in particular it is determined by $\R[V^k]^G$. Indeed, $V^{\dim(V)}$ can be identified with $\operatorname{End}(V)$, and $G$ is the $G$-orbit through $\operatorname{Id}\in \operatorname{End}(V)$. For a concrete example, compare Examples \ref{E:FFTorthogonal} and \ref{E:FFTSO}: the groups $\OO(n)$ and $\SO(n)$ have the same orbits, hence have the same algebra of invariants and algebras of generalized polarizations. But the algebras $\R[V^k]^{\OO(n)}$ and $\R[V^k]^{\SO(n)}$ are distinct when $k\geq n$.

Subgroups $G,G'\subset \OO(V)$ are called \emph{orbit-equivalent} if they have the same orbits. The discussion above shows that, at least when $k$ is large, we can only expect $A^{(k)}= \R[V^k]^G$ if $G$ is \emph{maximal} (with respect to inclusion) in its orbit-equivalence class.

\todo{removed a theorem from here, incorporated statement into Corollary \ref{MC:polarization2}}

\begin{proof}[Proof of Theorem \ref{MT:polarization2}]
Being ``maximal in its orbit-equivalence class'' is just another way of saying that $G=\OO(\sigma)$, where $\sigma$ is the quotient map $V\to V/G$. Thus the first part of the statement follows immediately from Theorem \ref{MT:hompol}.
\begin{enumerate}[a)]
\item  By Remark \ref{R:k-NS}, it suffices to take $G$ to be a maximal torus in $\OO(V)$, and show that $G$ has the $2$-NS property. If $V$ is even-dimensional, we may identify $V$ with $\C^n$, and let $G=\U(1)^n$ act on $V$ in the standard way. Then the normal spaces at a pair points near $(1, \ldots, 1)$ and $(\sqrt{-1}, \ldots, \sqrt{-1})$ span $V$, so $G$ has $2$-NS. If $V$ is odd-dimensional, we have $V=\C^n\oplus\R$ with $G=\U(1)^n$ acting only on the $\C^n$ factor. Again, the normal spaces at a pair points near $(1, \ldots, 1, 0)$ and $(\sqrt{-1}, \ldots, \sqrt{-1}, 0)$ span $V$, so $G$ has $2$-NS.
\item $k\geq \dim(V)$ implies $k$-NS because the position vector $v$ is normal to the $G$-orbit through $v$, for any $v\in V$.
\end{enumerate}
\end{proof}

\begin{remark} 
\label{R:altproof}
Theorem  \ref{MT:polarization2} gives an alternative proof of Corollary \ref{MC:polarization}, which avoids \cite[Theorem 3.4]{DKW08}, because $G$ finite implies both $k$-NS for all $k$, and maximality in its orbit-equivalence class (see \cite[Lemma 1]{Swartz02}).
\end{remark}

\begin{remark}
If $G\subset\OO(V)$ is maximal in its orbit-equivalence class, the algebra $A=\R[V]^G$ determines $G$, hence it also determines $\R[V^k]^G$ for all $k$. As a corollary of Theorem \ref{MT:polarization2}, we obtain an explicit way to produce $\R[V^k]^G$ out of $A=\R[V]^G$. Namely, out of $A$ one constructs $A^{(\dim(V))}\subset \R[V^{\dim(V)}]$, which equals $\R[V^{\dim(V)}]^G$ by Theorem \ref{MT:polarization2}, and therefore  $\R[V^k]^G$ is the restriction of $A^{(\dim(V))}$ to the subspace $V^k\subset V^{\dim(V)}$.
\end{remark}

\begin{remark}
\label{R:polproperty}
Theorem \ref{MT:polarization2} applies to many groups $G$. In contrast, $\R[V^k]^G$ being generated by classical polarizations  is quite special.

Schwarz \cite{Schwarz07} has classified the complex rational representations $V$ of simple reductive algebraic groups $G$ such that $\C[V^k]^G$ is generated by classical polarizations. In particular the irreducible ones are coregular, that is, $\C[V]^G$ is free, and the reducible ones are isomorphic to a direct sum of a certain number of copies of the standard action of $\operatorname{SL(n)}$ on $\C^n$. It is also conjectured in \cite{Schwarz07} that if $G$ is a finite group and $\C[V^k]^G$ is generated by classical polarizations, then the action of $G$ on $V$ is generated by reflections.

Even if $G$ is generated by reflections, $\C[V^2]^G$ (and hence $\C[V^k]^G$ for $k\geq 3$) may fail to be generated by classical polarizations, for example when $G$ is the Weyl group of type $D_4$. As for Wallach polarizations, they \emph{are} enough to generate $\C[V^2]^G$ for $G$ of type $D_n$ for all $n$, but \emph{not} for $G$ of type $F_4$. See \cite[Appendix 2]{Wallach93} and \cite{Hunziker97}.
\end{remark}

\subsection{Homogeneity of infinitesimal manifold submetries}
\label{SS:homogeneity}

Not all infinitesimal manifold submetries $(V,\F)$ are homogeneous, that is, given by the orbit decomposition of some compact subgroup $G\subset \OO(V)$. Historically, the first inhomogeneous examples were isoparametric foliations constructed in \cite{OT75, OT76}, later generalized in \cite{FKM81}, and the octonionic Hopf fibration of $\R^{16}$. The latter is an example of a Clifford foliation, see Example \ref{E:Clifford}. All these are examples of \emph{composed Clifford foliations}, see \cite{Radeschi14}. In some sense ``most''  composed Clifford foliations are inhomogeneous \cite{GR16}, and, since this construction can be applied to any homogeneous foliation, one can reasonably argue that there are ``at least as many'' inhomogeneous manifold submetries as homogeneous ones.

 There are many ways one can prove that a given manifold submetry $(V,\F)$ is inhomogeneous. One method involves generalized polarizations. Recall from Subsection \ref{SS:poldefs} that the restriction of $\F^{(k)}$ to the invariant subspace $V\times 0 \times \cdots \times 0\subset V^k$ is finer than the original manifold submetry $\F$, and that they coincide if $\F$ is homogeneous. Thus, if one can find a number $k$ and a generalized polarization in $A^{(k)}$ whose restriction to $V\times 0 \times \cdots \times 0$ is not constant on some $\F$-leaf, then $\F$ is inhomogeneous. The following result provides a converse to this method, under the technical $k$-NS assumption:
\begin{theorem}
\label{T:homogeneity}
Let $(V,\F)$ be an infinitesimal manifold submetry with algebra of basic polynomials $A$, and $k\geq 2$. Assume $(V,\F)$ satisfies $k$-NS, and that the restriction of $\F^{(k)}$ to the saturated subspace $V\times \{0\}\times\cdots\times\{0\}$ is isomorphic to $\F$. Then $\F$ is homogeneous.
\end{theorem}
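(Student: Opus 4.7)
The plan is to combine Theorem \ref{T:hompol} with the hypothesis on the restriction of $\F^{(k)}$ to $V_1 := V\times\{0\}\times\cdots\times\{0\}$. First, the $k$-NS assumption is exactly what is needed to invoke Theorem \ref{T:hompol}, which yields $A^{(k)}=\R[V^k]^{G}$ where $G:=\OO(\F)$ is the closed subgroup of $\OO(V)$ preserving each $\F$-leaf, acting diagonally on $V^k$. By Theorem \ref{MT:correspondence}, the corresponding submetry $\F^{(k)}$ is the decomposition of $V^k$ into the orbits of this diagonal $G$-action.

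Second, I would identify the restriction $\F^{(k)}|_{V_1}$ explicitly. The subspace $V_1$ is $G$-invariant, since $g\cdot(v,0,\ldots,0)=(gv,0,\ldots,0)$. Under the natural identification $V_1\cong V$, the $G$-orbit through $(v,0,\ldots,0)$ is precisely the $G$-orbit through $v\in V$. Hence $\F^{(k)}|_{V_1}$, viewed as a decomposition of $V$, is the orbit decomposition of $V$ under $G=\OO(\F)$.

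Third, by the very definition of $\OO(\F)$, every $G$-orbit on $V$ is contained in some $\F$-leaf. Equivalently, $\F^{(k)}|_{V_1}$ refines $\F$. The hypothesis says that $\F^{(k)}|_{V_1}$ is isomorphic to $\F$, and since both are decompositions of the same $V$ and the natural map of leaf spaces $V/\F^{(k)}|_{V_1}\to V/\F$ is a surjective submetry, this isomorphism forces the refinement to be trivial: the $G$-orbits coincide with the $\F$-leaves. Therefore $\F$ is the orbit decomposition of the compact subgroup $G\subset\OO(V)$, i.e.\ $\F$ is homogeneous.

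The only step that requires some care is the last one, namely that ``refinement'' plus ``isomorphic'' forces literal equality of the two decompositions of $V$. I expect this to be essentially immediate from the definition of isomorphism of manifold submetries (a leafwise isometry of $V$ inducing an isometry of leaf spaces), combined with the fact that the natural map between the two leaf spaces is already a submetry and a bijection whenever both quotients have the same ``size''; but if the precise notion of isomorphism used in the paper requires more than this, one may instead argue via basic polynomials: the inclusion $A=\B(\F)\subset A^{(k)}|_{V_1}=\B(\F^{(k)}|_{V_1})$ must be an equality of algebras once the two submetries are isomorphic, and then $\F=\F^{(k)}|_{V_1}$ follows from Theorem \ref{MT:correspondence}.
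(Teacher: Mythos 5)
Your proposal is correct and follows essentially the same route as the paper: invoke Theorem \ref{T:hompol} via the $k$-NS hypothesis to identify $\F^{(k)}$ with the diagonal $\OO(\F)$-orbit decomposition, restrict to $V\times\{0\}\times\cdots\times\{0\}$, and use the isomorphism hypothesis to conclude that the $\OO(\F)$-orbits are exactly the $\F$-leaves. Your extra care about why ``refines plus isomorphic'' forces equality (including the fallback via basic polynomials and Theorem \ref{MT:correspondence}) is consistent with the paper's own remark that the hypothesis can be rephrased as the restriction of $A^{(k)}$ being equal to $A$.
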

\begin{proof}
By Theorem \ref{MT:hompol}, $\F^{(k)}$ is given by the orbits of the diagonal action of $\OO(\F)$ on $V^k$. In particular, its restriction to $V\times \{0\}\times\cdots\times\{0\}$ is given by the orbits of $\OO(\F)$ acting on $V$. Thus $\F$ is also given by these orbits, that is, $\F$ is homogeneous.
\end{proof}

As a corollary, if the restriction of $\F^{(\dim(V))}$ to the saturated subspace $V\times \{0\}\times\cdots\times\{0\}$ is isomorphic to $\F$, then $\F$ is homogeneous. Note also that the condition in Theorem  \ref{T:homogeneity} can be rephrased as an algebraic condition on $A$, namely ``the restriction of $A^{(k)}$ to  $V\times \{0\}\times\cdots\times\{0\}$ is equal to $A$''.

\begin{remark}\label{R:quadratic3}
In the notations of Remarks \ref{R:quadratic} and \ref{R:quadratic2}, let $A$ be a Laplacian algebra generated by the homogeneous quadratic polynomials $A_2$, seen as a Jordan subalgebra $A_2\simeq J\subset\Sym^2(R^n)$. Let $\mathcal{U}\subset \operatorname{Mat}_{n\times n}(\R)$ be the enveloping algebra of $J$, that is, the span of all products of matrices in $J$. Then it is not hard to see that, for all $k\geq 2$,  the algebra $A^{(k)}$ of generalized polarizations is the algebra generated by those quadratic polynomials in $\R[V^k]_2$ whose Hessians have the form 
\[
\begin{bmatrix}
C_{11} & C_{12} & \cdots &C_{1k} \\
C_{12}^T  & C_{22} & \cdots &C_{2k} \\
\vdots & & &\vdots \\
C_{1k}^T  & \cdots & \cdots  & C_{kk}
\end{bmatrix}
\]
where $C_{ij}\in\mathcal{U}$ for all $i,j$. In particular, the restriction of $A^{(k)}$ to  $V\times \{0\}\times\cdots\times\{0\}$ is independent of $k$, for $k\geq 2$. Thus Theorem \ref{T:homogeneity} implies that $A$ is homogeneous if and only if the restriction of $A^{(2)}$ to  $V\times \{0\}$ is equal to $A$ if and only if every symmetric matrix in $\mathcal{U}$ belongs to $J$.
\end{remark}

Since most Clifford foliations do not satisfy the $2$-NS condition, Remark \ref{R:quadratic3} makes one wonder:
\begin{question}
Does Theorem \ref{T:homogeneity} hold without the $k$-NS condition?
\end{question}

\bibliography{ref}

\begin{thebibliography}{DKW08}

\bibitem[DK15]{DerksenKemper}
Harm Derksen and Gregor Kemper.
\newblock {\em Computational invariant theory}, volume 130 of {\em
  Encyclopaedia of Mathematical Sciences}.
\newblock Springer, Heidelberg, enlarged edition, 2015.
\newblock With two appendices by Vladimir L. Popov, and an addendum by Norbert
  A'Campo and Popov, Invariant Theory and Algebraic Transformation Groups,
  VIII.

\bibitem[DKW08]{DKW08}
Jan Draisma, Gregor Kemper, and David Wehlau.
\newblock Polarization of separating invariants.
\newblock {\em Canad. J. Math.}, 60(3):556--571, 2008.

\bibitem[FKM81]{FKM81}
Dirk Ferus, Hermann Karcher, and Hans~Friedrich M\"{u}nzner.
\newblock Cliffordalgebren und neue isoparametrische {H}yperfl\"{a}chen.
\newblock {\em Math. Z.}, 177(4):479--502, 1981.

\bibitem[GR16]{GR16}
Claudio Gorodski and Marco Radeschi.
\newblock On homogeneous composed {C}lifford foliations.
\newblock {\em M\"{u}nster J. Math.}, 9(1):35--50, 2016.

\bibitem[Hel00]{HelgasonGGA}
Sigurdur Helgason.
\newblock {\em Groups and geometric analysis}, volume~83 of {\em Mathematical
  Surveys and Monographs}.
\newblock American Mathematical Society, Providence, RI, 2000.
\newblock Integral geometry, invariant differential operators, and spherical
  functions, Corrected reprint of the 1984 original.

\bibitem[HT92]{HoweTan}
Roger Howe and Eng-Chye Tan.
\newblock {\em Nonabelian harmonic analysis}.
\newblock Universitext. Springer-Verlag, New York, 1992.
\newblock Applications of ${{\rm{S}}L}(2,{{\bf{R}}})$.

\bibitem[Hun97]{Hunziker97}
M.~Hunziker.
\newblock Classical invariant theory for finite reflection groups.
\newblock {\em Transform. Groups}, 2(2):147--163, 1997.

\bibitem[Ilt98]{Iltyakov98}
A.~V. Iltyakov.
\newblock Laplace operator and polynomial invariants.
\newblock {\em J. Algebra}, 207(1):256--271, 1998.

\bibitem[KL20]{KL20}
Vitali {Kapovitch} and Alexander {Lytchak}.
\newblock {Structure of Submetries}.
\newblock {\em arXiv e-prints}, page arXiv:2007.01325, July 2020.

\bibitem[KP96]{KraftProcesi}
Hanspeter Kraft and Claudio Procesi.
\newblock {\em Classical Invariant Theory: A Primer.}
\newblock 1996.
\newblock Lecture notes available on https://kraftadmin.wixsite.com/hpkraft.

\bibitem[LR18]{LR18}
Alexander Lytchak and Marco Radeschi.
\newblock Algebraic nature of singular {R}iemannian foliations in spheres.
\newblock {\em J. Reine Angew. Math.}, 744:265--273, 2018.

\bibitem[MR20a]{MR20}
R.~A.~E. Mendes and M.~Radeschi.
\newblock Singular {R}iemannian foliations and their quadratic basic
  polynomials.
\newblock {\em Transform. Groups}, 25(1):251--277, 2020.

\bibitem[MR20b]{MR19}
Ricardo A.~E. Mendes and Marco Radeschi.
\newblock Laplacian algebras, manifold submetries and the inverse invariant
  theory problem.
\newblock {\em Geom. Funct. Anal.}, 30(2):536--573, 2020.

\bibitem[M{\"{u}}n80]{Muenzner80}
Hans~Friedrich M{\"{u}}nzner.
\newblock Isoparametrische {H}yperfl\"{a}chen in {S}ph\"{a}ren.
\newblock {\em Math. Ann.}, 251(1):57--71, 1980.

\bibitem[M{\"{u}}n81]{Muenzner81}
Hans~Friedrich M{\"{u}}nzner.
\newblock Isoparametrische {H}yperfl\"{a}chen in {S}ph\"{a}ren. {II}. \"{U}ber
  die {Z}erlegung der {S}ph\"{a}re in {B}allb\"{u}ndel.
\newblock {\em Math. Ann.}, 256(2):215--232, 1981.

\bibitem[NS02]{NeuselSmith}
Mara~D. Neusel and Larry Smith.
\newblock {\em Invariant theory of finite groups}, volume~94 of {\em
  Mathematical Surveys and Monographs}.
\newblock American Mathematical Society, Providence, RI, 2002.

\bibitem[OT75]{OT75}
Hideki Ozeki and Masaru Takeuchi.
\newblock On some types of isoparametric hypersurfaces in spheres. {I}.
\newblock {\em Tohoku Math. J. (2)}, 27(4):515--559, 1975.

\bibitem[OT76]{OT76}
Hideki Ozeki and Masaru Takeuchi.
\newblock On some types of isoparametric hypersurfaces in spheres. {II}.
\newblock {\em Tohoku Math. J. (2)}, 28(1):7--55, 1976.

\bibitem[Rad14]{Radeschi14}
Marco Radeschi.
\newblock Clifford algebras and new singular {R}iemannian foliations in
  spheres.
\newblock {\em Geom. Funct. Anal.}, 24(5):1660--1682, 2014.

\bibitem[Rud76]{Rudin}
Walter Rudin.
\newblock {\em Principles of mathematical analysis}.
\newblock McGraw-Hill Book Co., New York-Auckland-D\"{u}sseldorf, third
  edition, 1976.
\newblock International Series in Pure and Applied Mathematics.

\bibitem[Sch07]{Schwarz07}
Gerald~W. Schwarz.
\newblock When polarizations generate.
\newblock {\em Transform. Groups}, 12(4):761--767, 2007.

\bibitem[Sch08]{Schrijver08}
Alexander Schrijver.
\newblock Tensor subalgebras and first fundamental theorems in invariant
  theory.
\newblock {\em J. Algebra}, 319(3):1305--1319, 2008.

\bibitem[Swa02]{Swartz02}
Ed~Swartz.
\newblock Matroids and quotients of spheres.
\newblock {\em Math. Z.}, 241(2):247--269, 2002.

\bibitem[Wal93]{Wallach93}
Nolan~R. Wallach.
\newblock Invariant differential operators on a reductive {L}ie algebra and
  {W}eyl group representations.
\newblock {\em J. Amer. Math. Soc.}, 6(4):779--816, 1993.

\bibitem[Wey97]{Weyl}
Hermann Weyl.
\newblock {\em The classical groups}.
\newblock Princeton Landmarks in Mathematics. Princeton University Press,
  Princeton, NJ, 1997.
\newblock Their invariants and representations, Fifteenth printing, Princeton
  Paperbacks.

\end{thebibliography}
\bibliographystyle{alpha}
\end{document}